\theoremstyle{plain}
\newtheorem{theorem}{Theorem}
\newtheorem{lemma}{Lemma}
\newtheorem{proposition}{Proposition}
\newtheorem{remark}{Remark}
\newtheorem{definition}{Definition}
\newtheorem{example}{Example}
\newcommand{\name}[1]{#1} 
\newcommand{\notion}[1]{\textit{#1}}
\newcommand{\const}{\ensuremath{\text{const}}}
\newcommand{\circled}[1]{\ensuremath{\text{(#1)}}}
\newcommand{\rg}{\mathrm{rk}}
\begin{document}

\title{Reducibility of Valence-3 Killing Tensors in Weyl's Class of Stationary and Axially Symmetric Space-Times}
\author{Andreas Vollmer\\\small{Institut f\"ur Mathematik, Friedrich-Schiller-Universit\"at Jena (Germany)}\\\small{andreas.vollmer@uni-jena.de}}
\maketitle

\section*{Abstract}
Stationary and axially symmetric space-times play an important role in astrophysics, particularly in the theory of neutron stars and black holes.
The static vacuum sub-class of these space-times is known as Weyl's class, and contains the Schwarzschild space-time as its most prominent example.
This paper is going to study the space of Killing tensor fields of valence~3 for space-times of Weyl's class. Killing tensor fields play a crucial role in physics since they are in correspondence to invariants of the geodesic motion (i.e.\ constants of the motion).

It will be proven that in static and axially symmetric vacuum space-times the space of Killing tensor fields of valence~3 is generated by Killing vector fields and quadratic Killing tensor fields. Using this result, it will be proven that for the family of Zipoy-Voorhees metrics, valence-3 Killing tensor fields are always generated by Killing vector fields and the metric.

\section{Introduction}
Consider a manifold $M$ with Lorentzian metric $g$, and its cotangent space $T^\ast M$ endowed with its natural symplectic form.
A Killing tensor field~$K$ of valence~$d$ on~$M$ is a symmetric $(0,d)$-tensor such that
\begin{equation}\label{eqn:killing}
  \nabla_{(a}K_{b_1\dots b_d)}=0.
\end{equation}
The Lorentzian metric provides an isomorphism between $T^\ast M$ and $TM$, and we are therefore going to identify co- and contravariant tensor fields as well as the corresponding homomorphisms with mixed co- and contravariant indices.

Killing tensor fields are in 1-to-1 correspondence to (first) integrals (or \notion{Hamiltonian invariants}) polynomial in the momenta $p$ (or the velocities $\dot{\gamma}$) of the Hamiltonian motion for the Hamiltonian function $H=g^{ij}p_ip_j=g(\dot{\gamma},\dot{\gamma})$.
In the language of integrals, the Killing tensor equation takes the form
\begin{equation}\label{eqn:integral-killing}
  \{I_K,H\} = X_H(I_K) \equiv 0
\end{equation}
where $\{\cdot,\cdot\}$ denotes the usual \name{Poisson} bracket on $T^\ast M$, and where $X_H(I_K)$ is the derivative of the function $I_K=K(\dot{\gamma},\dots,\dot{\gamma})$ in the direction of the Hamiltonian vector field $X_H$.
Two integrals are said to be in involution if they commute with respect to the Poisson bracket

Studying the existence of polynomial integrals is interesting from at least two perspectives.
Firstly, the existence of integrals can help in answering natural questions about the behavior of trajectories, i.e.\ the behavior of free falling particles in physically motivated Hamiltonian systems (e.g.\ by the famous Liouville-Arnold Theorem one can integrate the system by quadratures under certain additional assumptions).

Secondly, asking for the existence of integrals is a natural geometric requirement. Metrics meeting this requirement may lead to physically interesting examples, as for example in the case of the \name{Kerr} metric possessing the \name{Carter} constant \cite{carter_hamilton-jacobi_1968, carter_global_1968}, an integral quadratic in momenta in addition to energy and axial symmetry. Integrals polynomial in the momenta are of particular interest since they represent a generalization of constants of the motion that emerge from the action of one-parameter groups of diffeomorphisms. According to \cite{markakis_constants_2014}, the example of \name{Kerr-de Sitter} space-times is at present the only known example of integrable space-times with an additional integral of higher-than-linear degree, among the class of stationary and axially symmetric space-times.

Static and axially symmetric vacuum (StAV) space-times form \notion{Weyl's class}; they are special cases of stationary and axially symmetric vacuum (SAV) space-times \cite{stephani_exact_2003, brink_II_2008}.
Recently, some attention has been drawn to the \name{Zipoy-Voorhees} family, which belongs to this class \cite{voorhees_static_1970, zipoy_topology_1966}. Numerical studies \cite{brink_I_2008, brink_II_2008, brink_III_2009, brink_IV_2009} suggested integrability for some StAV metrics, while later studies provided contradicting evidence \cite{kruglikov_nonexistence_2011, gerakopoulos_non-integrability_2012, maciejewski_nonexistence_2013}. Note that while these studies considered fixed values of the parameter $\delta$ of the \name{Zipoy-Voorhees} metric, in this paper we are going to consider arbitrary $\delta$ in the case of the \name{Zipoy-Voorhees} metric. Therefore the result for the \name{Zipoy-Voorhees} family is in line with the evidence contradicting integrability of the family.

The methods used in this paper are not restricted to the concrete setting of Weyl's class. It is therefore likely that the methods are suitable for other examples of parametrized metrics in two dimensions.
It seems probable that such examples include 2-dimensional ones with potential, since this is closest to the case studied here.
The study of integrals in 2-dimensional manifolds is a classical problem in differential geometry and goes at least back to Darboux and Koenigs \cite{darboux_cons_1887}.
For instance, some non-existence results have been obtained on the 2-torus for cubic and quartic integrals in \cite{byalyi_first_1987, denisova_polynomial_2000, bialy_cubic_2011}, while for higher degrees almost nothing is known.
On the 2-sphere, however, some integrable examples are known. For instance, a new integrable system has been presented, with the additional integral being of cubic degree, by Dullin and Matveev \cite{dullin_new_2004}.

The procedure taken in this paper is a new approach to the question of existence of integrals.
It combines two major previous lines of action:
\begin{itemize}
 \item[--] It is inspired by ideas from prolongation-projection methods well-known in the theory of overdetermined PDE systems.
 However, it does not follow the algorithmic procedure used in \cite{kruglikov_nonexistence_2011}.
 \item[--] It follows ideas outlined in \cite{hietarinta_direct_1987}, but takes a somehow converse track that could be described as ``bottom-up" in contrast to the ``top-down" approach taken in \cite{hietarinta_direct_1987}. The advantage of this direction of reasoning is that it avoids solving the leading-degree Poisson equation. Instead, our approach rather begins with solving simple geometric orthogonality relations.
\end{itemize}

\subsection{Static and Axially Symmetric Vacuum Space-Times}
Stationary and axially symmetric vacuum space-times possess two commuting Killing vector fields, one being space-like and the other being time-like.
Such space-times can be brought into the following standard form~\cite{stephani_exact_2003} by the aid of suitable coordinate transformations. The coordinates are called \name{Lewis-Papapetrou} coordinates \cite{brink_I_2008, brink_IV_2009}.
\begin{equation}\label{eqn:sav-metric}
 g=e^{2U}\ \left( e^{-2\gamma}\,\left(dx^2+dy^2\right)\,+R^2\,d\phi^2 \right) -e^{-2U}\,\left(dt-\omega\,d\phi\right)^2
\end{equation}
We will restrict our attention to vacuum space-times and therefore require the Ricci tensor of $g$ to be identically zero.
This is a fair assumption for the movement of test particles around astrophysical objects as long as electro-magnetic fields are ignored.
For SAV space-times, Ricci flatness is encoded in a set of equations which are called the \name{Ernst} equations.
In the static case, we require $\omega=0$. Then, the \name{Ernst} equations read as follows
\begin{align*}
 R_y\,U_y +R_x\,U_x +R\,U_{yy} +R\,U_{xx}	&= 0 \\
 \Delta R = R_{xx} + R_{yy}				&= 0 \\
 2R\,U_x^2-2R_y\,\gamma_y+2R_x\,\gamma_x+R_{xx}-R_{yy}-2R\,U_y^2 &= 0 \\
 2R\,U_x\,U_y+R_y\,\gamma_x+R_x\,\gamma_y+R_{xy}	&= 0
\end{align*}

The equations break up into two sets of two equations each, which we shall refer to as \notion{primary} and \notion{secondary} equations.
The primary equations give restrictions on $U$ and $R$. Provided $R$ is non-constant, $\Delta R=0$ allows setting $R=x>0$ by a change of coordinates~\cite{stephani_exact_2003}. 
If $R$ is constant, this change of coordinates is impossible, but one can show that $\Delta\gamma=0$ holds and the metric is flat.
In case of non-constant $R$, the secondary equations enable us to express derivatives of $\gamma$ in terms of derivatives of $U$, allowing us to eliminate them, and finally $\gamma$, from the equations.
We obtain the relations:
\begin{equation}\label{eqn:ernst}
\begin{aligned}
 U_{yy}			&= -U_{xx}-\frac{1}{x}\,U_x 
 \qquad &\qquad  \gamma_x		&= -x\,U_x^2+x\,U_y^2 \\
 R				&= x
 \qquad &\qquad  \gamma_y		&= -2x\,U_x\,U_y
\end{aligned}
\end{equation}

By definition, stationarity and axial symmetry can be described by the global symmetry group $h=\mathds{R}\times\mathds{S}^1$. The action of $h$ is \notion{Hamiltonian} and we denote the moment map by $\mu$.
$h$ acts freely, and we may pass to the symplectic quotient $Q_\text{red}=\mu^{-1}(0)/h$, which inherits a symplectic form from the initial space-time (with an additional compactness assumption, this is the \notion{\name{Marsden-Weinstein} quotient}, see e.g.~\cite{mcduff_introduction_1995}). 
In \name{Lewis-Papapetrou} coordinates,~$h$ acts along coordinate directions and we will be able to identify the reduced coordinates easily.
The 4-dimensional problem then is reformulated as a 2-dimensional problem with metric $g_\text{red}$, and the Hamiltonian $H=T+V$ splits into a kinetic term $T=H_\text{red}=g_\text{red}^{ij}p_ip_j$ along with a potential $V$, which is polynomial in $p_\phi$ and $p_t$.
As a consequence, we distinguish \notion{Hamiltonian integrals} that commute with $H$, from \notion{metric integrals} that commute with $T$ only.
Note that the highest-degree component w.r.t.\ $(p_x,p_y)$ of a Hamiltonian integral is metric (i.e.\ commutes with $T$).
The metric and the potential on the reduced space read:
\begin{subequations}\label{eqns:reduced-space-metrics}
\begin{align}
 g_\text{red}	&= e^{2U}\ \left( e^{-2\gamma}\,\left(dx^2+dy^2\right) \right) \\
 V				&=R^{-2} e^{-2U}\,p_\phi^2  -e^{2U}\,p_t^2
\end{align}
\end{subequations}

\subsection{Main Results}

We consider integrals of the general form
\begin{equation}\label{eqn:form-integrals}
 I(x,y) = \sum_{i=0}^3\ \sum_{j=0}^i\ \sum_{k=0}^{3-i}\ a_{i,j,k}(x,y)\ p_x^j\,p_y^{i-j}\,p_\phi^k\,p_t^{3-i-k}
\end{equation}
Such integrals are in involution with $p_\phi$ and $p_t$.
Since the Hamiltonian defined by equations~\eqref{eqns:reduced-space-metrics} does not mix momenta $p_x,p_y$ with $p_\phi,p_t$, the components of~\eqref{eqn:form-integrals} of odd and even parity in $(p_x,p_y)$ can be considered separately.
We prove reducibility of degree-3 integrals in space-times of Weyl's class.
\begin{definition}
Let $I$ be a polynomial integral of degree~$d$. We say that $I$ is \emph{reducible} (by one degree) if there are polynomial integrals $I_1,\dots, I_m$ of degree at most $d-1$ such that $I$ is a linear combination of products of the integrals $I_i$.
We say that $I$ is \emph{totally reducible} if there is a representation of this form such that the $I_i$ are integrals linear in momenta or the Hamiltonian.
\end{definition}

For space-times in Weyl's class, we prove reducibility of degree~3 integrals by one degree.
In addition, total reducibility of degree-3 integrals is shown for the family of Zipoy-Voorhees space-times (a sub-family of Weyl's class).

\begin{theorem}\label{thm:main-result-1}
Let $M$ be a space-time in Weyl's class. Then any integral~\eqref{eqn:form-integrals} of third degree on $M$ is reducible.
\end{theorem}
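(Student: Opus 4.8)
The plan is to exploit the structure of the integral~\eqref{eqn:form-integrals} by separating it according to parity in the momenta $(p_x,p_y)$, as already noted below~\eqref{eqn:form-integrals}. Since the Hamiltonian does not mix the block $(p_x,p_y)$ with the block $(p_\phi,p_t)$, the even-parity and odd-parity components of a degree-3 integral must each commute with $H$ separately. The degree-3 integral therefore splits into a part that is cubic (odd) in $(p_x,p_y)$ plus linear terms, and a part that is quadratic (even) in $(p_x,p_y)$ with the remaining degree carried by $p_\phi,p_t$. I would treat these two parities as independent problems and prove that each is reducible.

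\textbf{Leading-degree analysis.} First I would isolate the top-degree part of each parity component with respect to $(p_x,p_y)$; by the remark following equation~\eqref{eqns:reduced-space-metrics}, this leading component is \emph{metric}, i.e.\ it commutes with the kinetic term $T=H_\text{red}$ alone. Thus the coefficients $a_{i,j,k}$ of highest order in $(p_x,p_y)$ define a Killing tensor on the two-dimensional reduced space $(Q_\text{red},g_\text{red})$, with $p_\phi,p_t$ entering only as parameters. The engine here is the classical theory of Killing tensors on surfaces: on a two-dimensional manifold the Killing equation for the leading symbol is a closed overdetermined system, and the conformal flatness of $g_\text{red}$ in~\eqref{eqns:reduced-space-metrics} makes the relevant PDE tractable. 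Following the ``bottom-up'' philosophy advertised in the introduction, I would not solve the full leading Poisson equation abstractly but instead extract simple orthogonality/integrability relations on the coefficient functions, using the \name{Ernst} relations~\eqref{eqn:ernst} to substitute for all derivatives of $\gamma$ and for $U_{yy}$ wherever they appear.

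\textbf{Propagation down the degrees.} Having constrained the leading coefficients, I would feed this information into the subleading Killing-type equations obtained by collecting the remaining homogeneous components of $\{I,H\}=0$ in the momenta. Each lower-degree layer is an inhomogeneous linear system whose source terms are built from the already-determined higher coefficients together with the potential $V$ from~\eqref{eqns:reduced-space-metrics}; here the potential reintroduces the coupling to $p_\phi,p_t$. The goal is to show that any solution of the full system can be written as a linear combination of products of the manifest lower-degree integrals, namely the Killing vector fields $p_\phi$ and $p_t$ and the quadratic metric integrals that exist on the reduced surface, thereby establishing reducibility by one degree in the sense of the \emph{reducible} clause of the Definition.

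\textbf{Main obstacle.} The hard part will be the subleading, lower-parity layers rather than the leading-degree Killing analysis: once the top coefficients are pinned down, one must verify that the inhomogeneous compatibility conditions at each subsequent degree are satisfied \emph{identically} modulo the \name{Ernst} relations, and that no genuinely new (irreducible) integral can arise from the interaction between the metric part and the potential $V$. Controlling this interaction---showing that every source term can be absorbed into a product of strictly lower-degree integrals---is where the \name{Ernst} equations~\eqref{eqn:ernst} must be used decisively, and it is the step most likely to require careful bookkeeping and case analysis on whether certain coefficient functions vanish. I expect the argument to hinge on demonstrating that the second-order integrability (``curl'') conditions forced by the Killing system, when combined with vacuum flatness, leave no freedom beyond the reducible solutions.
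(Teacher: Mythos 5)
Your parity split and your treatment of the even-parity sector do match the paper: there, the even-parity equations are recognized as precisely the quadratic-integral equations of remark~\ref{rmk:quadratic-integrals}, so that part of the integral is $p_\phi$ and $p_t$ times quadratic integrals. The genuine gap is in the odd-parity sector, and it is twofold. First, despite invoking the ``bottom-up'' philosophy, your plan is actually top-down: you begin by solving the leading equation $\{T,I^\circled{3}\}=0$, i.e.\ by classifying valence-3 Killing tensors of the reduced surface. But $g_\text{red}$ depends on an \emph{arbitrary} solution $U$ of the Ernst equations~\eqref{eqn:ernst}, so there is no classical-theory answer to this equation uniform in $U$; solving the leading-degree Poisson equation is exactly the intractable step the paper is engineered to avoid. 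The paper instead solves the bottom block~\eqref{eqn:bottom-block} first: these are orthogonality relations forcing $b^{\phi\phi}=\alpha_1\,\nabla^\perp V^{\phi\phi}$ and $b^{tt}=\alpha_2\,\nabla^\perp V^{tt}$; the middle block~\eqref{eqn:middle-block} then expresses the cubic part through the single function $\alpha=\alpha_1=\alpha_2$, yielding $\alpha_x=A\,\alpha$, $\alpha_y=B\,\alpha$ and the integrability condition $A_y-B_x=0$ (lemma~\ref{la:nec-crit}); only then is~\eqref{eqn:key-block} combined with the Ernst equations in a prolongation-projection computation (lemma~\ref{la:rk2}) whose outcome is that any nonzero solution forces the metric to be flat. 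This computation is the actual content of the theorem; your closing paragraph names the difficulty (``verify the compatibility conditions identically'') but supplies no mechanism to resolve it.

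Second, you omit the case analysis that organizes the whole proof: whether $\rg\,\mathcal{M}$ for $\mathcal{M}=(dV^{\phi\phi},dV^{tt},dV^{t\phi})$ equals $1$ or $2$, equivalently whether an additional Killing vector field exists (lemma~\ref{la:add-lin}). In the rank-1 case the orthogonality relations degenerate ($\sin\Psi=0$ in~\eqref{eqn:alpha-sin}), the function $\alpha$ is not defined, and the argument is entirely different: the odd-parity cubic integral factors as $F\,p_y$, and $F$ extends to a quadratic integral $\tilde F$ on the four-dimensional space-time (lemma~\ref{la:rk1}). This also shows that your stated reduction target is wrong: you propose to express everything through $p_\phi$, $p_t$ and metric integrals of the reduced surface, but in the rank-1 case the reduction is by the \emph{extra} Killing vector $p_y$ times a quadratic integral of the initial space-time, and that quadratic integral may itself be irreducible --- the theorem asserts reducibility by one degree, not total reducibility. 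Without the rank dichotomy and without the $\alpha$-based elimination, the proposal is an outline of the problem rather than a proof.
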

Reducibility of a degree-3 polynomial integral means that these integrals can be written via products of lower-degree integrals. In the language of Killing tensors, this means that any valence-3 Killing tensor field can be written via symmetrized products of Killing vector fields and quadratic Killing fields.

For a concrete example, consider the \name{Zipoy-Voorhees} class of space-times.
Their metrics are static and axially symmetric, and parametrized by a parameter $\delta\geq 0$ \cite{voorhees_static_1970}.
Therefore, this family forms a subset of Weyl's class.
\begin{multline}
  g=\left(\frac{x+1}{x-1}\right)^\delta \Bigg(
		\left(\frac{x^2-1}{x^2-y^2}\right)^{\delta^2-1} dx^2
		+\left(\frac{x^2-1}{x^2-y^2}\right)^{\delta^2}\frac{x^2-y^2}{1-y^2}\ dy^2 \\
		+(x^2-1)(1-y^2)\ dz^2 \Bigg)  -\left(\frac{x-1}{x+1}\right)^\delta dt^2.
\end{multline}
The resulting metric for $\delta=0$ is flat. The value $\delta=1$ gives the Schwarzschild metric (that admits one additional quadratic integral).
We allow arbitrary $\delta\geq0$.

\begin{proposition}\label{prop:main-result-2}
Let $M_\text{ZV}$ be a Zipoy-Voorhees metric with parameter $\delta>0$, $\delta\not=1$. Let~$I$ be an integral~\eqref{eqn:form-integrals} of third degree on $M_\text{ZV}$. Then $I$ is totally reducible, i.e.\ generated by linear integrals (i.e.\ Killing vector fields) and the Hamiltonian (i.e.\ the metric).
\end{proposition}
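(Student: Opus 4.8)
The plan is to leverage Theorem~\ref{thm:main-result-1}: by reducibility, any degree-3 integral $I$ on a Weyl-class space-time (hence on any Zipoy-Voorhees metric with $\delta>0$) is a linear combination of products of integrals of degree at most two. Since $M_\text{ZV}$ admits the two obvious Killing vector fields $p_\phi=\partial_\phi$ (written $p_z$ in the displayed metric) and $p_t=\partial_t$, and the Hamiltonian $H$, total reducibility will follow once I show that the only quadratic integrals on $M_\text{ZV}$ (for $\delta>0$, $\delta\neq 1$) are linear combinations of $H$, products $p_\phi p_t$, $p_\phi^2$, $p_t^2$, and products of the linear integrals $p_\phi,p_t$ with each other. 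In other words, the hard content reduces to a classification of valence-2 Killing tensors on the Zipoy-Voorhees metrics, and I must rule out any genuinely new irreducible quadratic integral for these parameter values.

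First I would split the quadratic integral by parity in $(p_x,p_y)$, exactly as the paragraph following~\eqref{eqn:form-integrals} permits, and by its homogeneity in $(p_\phi,p_t)$. The even-parity, top-degree-in-$(p_x,p_y)$ part must be a metric integral, i.e.\ a valence-2 Killing tensor for the \emph{reduced} two-dimensional metric $g_\text{red}$ from~\eqref{eqns:reduced-space-metrics}. On a two-dimensional surface the space of Killing tensors of valence~2 is governed by a classical overdetermined system (Darboux--Koenigs type), and a nontrivial (non-metric) quadratic Killing tensor forces the metric to admit an orthogonal separating coordinate system of Liouville/Stäckel type. The plan is to write down the Killing tensor equations for $g_\text{red}$, substitute the explicit Zipoy-Voorhees conformal factor $e^{2U}e^{-2\gamma}$ obtained from~\eqref{eqn:ernst} with $U=\tfrac{\delta}{2}\log\frac{x+1}{x-1}$, and show that the resulting integrability/obstruction condition (a differential relation on the conformal factor, essentially a third-order PDE whose vanishing characterizes Liouville metrics) fails unless $\delta\in\{0,1\}$.

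The remaining pieces are lower-degree-in-$(p_x,p_y)$ and the coupling to the potential $V$. After the leading metric part is pinned down, I would feed it back into the Poisson equation $\{I,H\}=0$ and solve the cascade of lower-order equations, using the potential $V=R^{-2}e^{-2U}p_\phi^2-e^{2U}p_t^2$ to constrain the mixed terms. Because $V$ is built from the same function $U$, the compatibility conditions here should again single out the Schwarzschild value $\delta=1$ as the only case admitting a genuinely new quadratic integral (the Carter-type constant), so that for $\delta>0$, $\delta\neq1$ every quadratic integral collapses to a combination of $H$, $p_\phi^2$, $p_t^2$, and $p_\phi p_t$. Combined with reducibility, this yields total reducibility.

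The main obstacle I anticipate is the metric-integral classification step: verifying that the explicit Zipoy-Voorhees conformal factor admits no separation/Liouville structure for generic $\delta$. This amounts to checking that a specific nonlinear differential obstruction in $U$ (and hence in $\delta$) is nonzero away from $\delta\in\{0,1\}$, and the algebra is delicate precisely because the conformal factor has the transcendental form $\left(\frac{x+1}{x-1}\right)^\delta\!\left(\frac{x^2-1}{x^2-y^2}\right)^{\delta^2-1}$, so the $\delta$-dependence enters through exponents and must be handled carefully to isolate the exceptional values.
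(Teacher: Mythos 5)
Your top-level frame coincides with the paper's: invoke Theorem~\ref{thm:main-result-1}, note that for $\delta>0$ the space-time is non-flat with $\rg\,\mathcal{M}=2$ so the only Killing vectors are spanned by $p_\phi$ and $p_t$, and thereby reduce everything to showing that every \emph{quadratic} integral on $M_\text{ZV}$ lies in the span of $H$, $p_\phi^2$, $p_\phi p_t$, $p_t^2$. The gap lies in how you propose to carry out this quadratic classification. You want to decouple it into two stages: first classify valence-2 Killing tensors of the reduced two-dimensional metric $g_\text{red}$ \emph{alone} (via the Liouville/Koenigs obstruction), then couple to the potential. The intermediate claim of stage one --- that $g_\text{red}$ for Zipoy-Voorhees admits no Killing tensor beyond multiples of the metric when $\delta\neq 0,1$ --- is nowhere proven in your proposal (it is exactly the ``main obstacle'' you defer), it is strictly stronger than what the proposition needs, and it is not established anywhere in the paper either, so it cannot be borrowed. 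The paper never decouples the two sets of conditions: it combines the top-degree block (the Killing equations for $a_0,a_1,a_2$) with the Bertrand--Darboux relations coming from the potential from the very start, solves the coupled system explicitly (obtaining $a_1=0$ and $(y^2-1)a_2+(x^2-1)a_0=0$ for $\delta\neq1$, then closed-form expressions for $a_0,a_2,b_0,b_2$), and reads off $F=c_1H+c_2p_\phi^2+c_3p_\phi p_t+c_4p_t^2$. Hence the paper only ever uses the weaker fact that no reduced Killing tensor is \emph{compatible with the potential}; whether $g_\text{red}$ is Liouville is left open. This matters because your stage one could genuinely fail while the proposition remains true: at $\delta=1$ the reduced metric is a surface of revolution, $\frac{x+1}{x-1}\,dx^2+(x+1)^2\,d\theta^2$ with $y=\cos\theta$, so it has an extra Killing vector (hence extra Killing tensors) that simply do not lift to integrals of the full system; nothing you have written rules out analogous ``non-lifting'' Killing tensors of $g_\text{red}$ at other values of $\delta$, and if they exist your argument stalls at stage one with no conclusion.

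Two further points. First, even granting the claim is true, the obstruction check you describe is not a routine verification: deciding whether $\lambda\,(du^2+dv^2)$ is Liouville requires prolonging the overdetermined valence-2 Killing system and ruling out a nonzero holomorphic solution of the resulting condition; this is a prolongation-projection computation of the same order of difficulty as the paper's direct attack, but with the added risk above. Second, a technical slip: equations~\eqref{eqn:ernst} presuppose \name{Lewis-Papapetrou}/Weyl coordinates with $R=x$, whereas $U=\frac{\delta}{2}\log\frac{x+1}{x-1}$ is the Zipoy-Voorhees potential in \emph{prolate spheroidal} coordinates, where $R^2=(x^2-1)(1-y^2)\neq x^2$; substituting the latter into the former mixes two coordinate systems. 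The paper avoids this by working for Zipoy-Voorhees directly in prolate spheroidal coordinates with the explicit $\Omega_1,\Omega_2,V^\phi,V^t$. Finally, your second stage (feeding the leading part back through the cascade of lower-order equations) is also only sketched; in the paper's proof that stage is where all the actual formulas are produced, so as it stands the proposal is a program rather than a proof.
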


\section{Method}\label{sec:method}

The basic procedure is as follows:
\begin{enumerate}[label=(\roman*)]
\item Reduce the 4-dimensional problem without potential to finding integrals on a 2-dimensional Hamiltonian manifold with potential (symplectic reduction).
\item The existence of integrals is encoded in equations that emerge from the \name{Poisson} equation $\{H,I_K\}=0$ as coefficients w.r.t.\ momenta. Splitting according to the degree in momenta $(p_x,p_y)$ yields three polynomials in $p_\phi$ and $p_t$. If we decompose these polynomials further w.r.t.\ momenta, we obtain three blocks of equations.
\item Use the equations obtained from zeroth degree in momenta $(p_x,p_y)$ and solve them as far as possible, obtaining one function $\alpha$ to parametrize the integral (this is the case without an additional integral present. If there is an additional linear integral, lemma~\ref{la:rk1} applies).
\item From the block obtained from the degree-2 polynomial, extract two integrability conditions for $\alpha$.
\item The remaining system of equations is an overdetermined system of PDE involving the metric which is described by one function $U$ in two variables. We consider derivatives of $U$ as being new, independent unknowns. By taking derivatives (prolongation), and then eliminating higher derivatives of~$U$ (projection), we end up with an ordinary differential equation.
\item For the remaining ODE we show that its only solution corresponds to flat space. This allows to conclude that degree-3 integrals are always reducible.
\end{enumerate}
%

Equation \eqref{eqn:integral-killing} is the condition for a function $I$ to be an integral. Since we take $I$ to be polynomial in momenta of degree~$d$, \eqref{eqn:integral-killing} is a polynomial in momenta of degree~$d+1$. We are going to consider the system of PDE obtained from the coefficients of \eqref{eqn:integral-killing} w.r.t.\ momenta.
Symplectic reduction w.r.t.\ the symmetry group (stationarity, axial symmetry) suggests to regard $p_\phi$ and $p_t$ as parameters. We distinguish the equations of the PDE system by the momenta monomials to which they appeared as a coefficient.
For Weyl's class, the equations can then be arranged in a tree-like\cite{brink_II_2008} structure. We write down the Hamiltonian in the following form:
\begin{equation}\label{eqn:split-H}
 H = T + \underbrace{V^{\phi\phi}p_\phi^2 + V^{tt}p_t^2}_{=V}
\end{equation}
where $T\equiv H_\text{red}$ is the reduced Hamiltonian (i.e.\ a homogeneous polynomial in $p_x$, $p_y$, and where $V^{ab}$ are the smooth coefficient functions of $p_a\,p_b$ ($a,b\in\{\phi,t\}$)).
The integral $I$ can be decomposed accordingly. We denote
\begin{equation}\label{eqn:split-I}
 I=I^\circled{d}
 		+\underbrace{ I^\circled{d-1}_\phi p_\phi +I^\circled{d-1}_t p_t }_{=I^\circled{d-1}}
 		+\underbrace{ I^\circled{d-2}_{\phi\phi} p_\phi^2
 									+I^\circled{d-2}_{t\phi} p_t p_\phi
 									+I^\circled{d-2}_{tt} p_t^2 }_{=I^\circled{d-2}}
 		+\dots
 		+I^\circled{0}_{tt\dots t} p_t^d,
\end{equation}
where each $I^\circled{k}$ is of degree~$k$ in the momenta $p_x,p_y$.
We require the metric to be non-flat such that we can choose coordinates with $R=x$.
In this case we have three blocks of equations coming from the respective polynomials (this is step (ii) of the above list). We can extract the equations from the polynomials which are obtained by splitting~\eqref{eqn:integral-killing} according to the degree w.r.t.\ $(p_x,p_y)$:%
\begin{subequations}\label{eqn:blocks}
\begin{align}
 \{T,I^\circled{3}\}	&=0
 && \text{\textit{degree~4}}\label{eqn:key-block} \\
 \{T,I^\circled{1}\} +\{V,I^\circled{3}\} &=0
 && \text{\textit{degree~2}}\label{eqn:middle-block} \\
 \{V,I^\circled{1}\}	&=0
 && \text{\textit{degree~0}}\label{eqn:bottom-block}
\end{align}
\end{subequations}
The equations of even parity in $(p_x,p_y)$ split off from this system and form a separate, decoupled system.
Equation \eqref{eqn:key-block} is the condition that must hold for an integral $I^\circled{3}$ on the reduced manifold with Hamiltonian $T=H_\text{red}$.
However, only some of these integrals ascent to integrals upstairs on the initial manifold.
This is due to the restrictions \eqref{eqn:middle-block} and \eqref{eqn:bottom-block}.
For a better understanding of these equations (or the equations obtained from them as coefficients w.r.t.\ momenta), we will characterize them as defining equations for $I^\circled{3}$ and $I^\circled{1}$.
But first let us split the system further by considering coefficients w.r.t.~$(p_t, p_\phi)$.
The polynomial \eqref{eqn:key-block} does not split since it is already of degree~4 in momenta $(p_x,p_y)$.
The polynomial \eqref{eqn:middle-block} splits into three parts:
\begin{align*}
 \{T,I^\circled{1}_{\phi\phi}\} +\{V^{\phi\phi},I^\circled{3}\} &=0 \\
 & 0 = \{T,I^\circled{1}_{t\phi}\} \\
 \{T,I^\circled{1}_{tt}\} +\{V^{tt},I^\circled{3}\} &=0
\end{align*}
We write the equations in this form to hint at the fact that the equations can be divided into two groups that can be treated separately.
This will become clear when we include \eqref{eqn:bottom-block}.
The second of the equations says that $I^\circled{1}_{t\phi}$ is a metric integral on the reduced space.
In fact, we will see that it even has to be an integral on the initial space-time, and therefore is not of interest for our considerations.
The polynomial \eqref{eqn:bottom-block} splits into five parts:
\begin{align*}
 \{V^{\phi\phi},I^\circled{1}_{\phi\phi}\}	&=0 \\
 & 0 = \{V^{\phi\phi},I^\circled{1}_{t\phi}\} \\
 \{V^{tt},I^\circled{1}_{\phi\phi}\} + \{V^{\phi\phi},I^\circled{1}_{tt}\}	&=0 \\
 & 0 = \{V^{tt},I^\circled{1}_{t\phi}\} \\
 \{V^{tt},I^\circled{1}_{tt}\}	&=0
\end{align*}
The second and fourth of these relations tell us that $I^\circled{1}_{t\phi}$ is an integral not only on the reduced, but also on the initial space.
We can isolate this subsystem from the remaining one and solve it separately (this procedure is possible in general for Weyl's class). This can easily be done and is equivalent to finding Killing vector fields of the space-time under consideration.

The remaining equations from \eqref{eqn:bottom-block} can be interpreted in a nice way as scalar product relations for the components of $I^\circled{1}$. For instance,
\[
 \{V^{\phi\phi},I^\circled{1}_{\phi\phi}\} =
    V^{\phi\phi}_x\ b^{\phi\phi}_1 + V^{\phi\phi}_y b^{\phi\phi}_2 =
       e^{2U-2\gamma}\,\langle \nabla V^{\phi\phi}, b^{\phi\phi} \rangle = \langle dV^{\phi\phi},b^{\phi\phi}\rangle
\]
where $I^\circled{1}_{\phi\phi}=b_1^{\phi\phi} p_x+ b_2^{\phi\phi} p_y$ and where $\nabla V^{\phi\phi}$ denotes the gradient vector corresponding to the differential $dV^{\phi\phi}$.
The polynomial \eqref{eqn:bottom-block} therefore gives rise to a set of scalar product relations:
\begin{align*}
 \langle \nabla V^{\phi\phi}, b^{\phi\phi} \rangle &= 0 \\
 \langle \nabla V^{tt}, b^{\phi\phi} \rangle + \langle \nabla V^{\phi\phi}, b^{tt} \rangle &= 0 \\
 \langle \nabla V^{tt}, b^{tt} \rangle &= 0
\end{align*}
This allows to solve \eqref{eqn:bottom-block} directly for $b^{\phi\phi}$ and $b^{tt}$,
\[
 b^{\phi\phi} = \alpha_1\,\nabla^\perp V^{\phi\phi}, \qquad
 b^{tt} = \alpha_2\,\nabla^\perp V^{tt}
\]
where we introduce the shorthand notation
$\nabla^\perp f=e^{2U-2\gamma}(-f_y,f_x)$ for a function $f$,
i.e.\ $\nabla^\perp f$ is the vector field rotated by $\nicefrac{\pi}{2}$ compared to $\nabla f$.
Defining the angle $\Psi$ between $\nabla V^{tt}$ and $\nabla V^{\phi\phi}$,
\[
 \cos\Psi =\frac{\langle\nabla V^{tt},\nabla V^{\phi\phi}\rangle}{\lVert\nabla V^{\phi\phi}\rVert\ \lVert\nabla V^{tt}\rVert},
\]
the second of the three scalar product relations can be brought into the form
\begin{equation}\label{eqn:alpha-sin}
 (\alpha_2-\alpha_1)\ \sin\Psi = 0
\end{equation}
This is step (iii) in the list given at the beginning of this section. We summarize:
\begin{lemma}
 Either the metric potentials are such that $\nabla V^{\phi\phi}$ and $\nabla V^{tt}$ are parallel, or the parameter functions $\alpha_1$ and $\alpha_2$ are equal.
\end{lemma}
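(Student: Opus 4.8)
The plan is to read the claimed dichotomy directly off equation~\eqref{eqn:alpha-sin}, so the real work is to \emph{derive} that equation from the three scalar-product relations obtained from \eqref{eqn:bottom-block}. First I would use the outer two relations, $\langle\nabla V^{\phi\phi},b^{\phi\phi}\rangle=0$ and $\langle\nabla V^{tt},b^{tt}\rangle=0$, to justify the stated parametrization $b^{\phi\phi}=\alpha_1\,\nabla^\perp V^{\phi\phi}$ and $b^{tt}=\alpha_2\,\nabla^\perp V^{tt}$: in two dimensions the orthogonal complement of a nonvanishing gradient is one-dimensional and is spanned by its $\tfrac{\pi}{2}$-rotation $\nabla^\perp$, so these are the general solutions, with the scalar factors absorbed into $\alpha_1,\alpha_2$. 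Substituting both into the remaining (second) relation reduces it to the single scalar equation
\[
 \alpha_1\,\langle\nabla V^{tt},\nabla^\perp V^{\phi\phi}\rangle+\alpha_2\,\langle\nabla V^{\phi\phi},\nabla^\perp V^{tt}\rangle=0.
\]

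The key algebraic input is the antisymmetry $\langle\nabla f,\nabla^\perp g\rangle=-\langle\nabla g,\nabla^\perp f\rangle$. This holds because the conformal factors carried by $\nabla$ and $\nabla^\perp$ cancel against each other, leaving $\langle\nabla f,\nabla^\perp g\rangle=f_y g_x-f_x g_y$, which is manifestly antisymmetric in $f$ and $g$; equivalently, $\nabla^\perp$ acts pointwise as a $\tfrac{\pi}{2}$-rotation $J$ with $J^\top=-J$. Applying this identity with $f=V^{\phi\phi}$ and $g=V^{tt}$ lets me merge the two terms of the displayed equation, which then collapses to $(\alpha_1-\alpha_2)\,\langle\nabla V^{tt},\nabla^\perp V^{\phi\phi}\rangle=0$.

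Finally I would identify the surviving bracket as a signed area, $\langle\nabla V^{tt},\nabla^\perp V^{\phi\phi}\rangle=\pm\lVert\nabla V^{tt}\rVert\,\lVert\nabla V^{\phi\phi}\rVert\sin\Psi$, where $\Psi$ is the angle defined above (the metric being conformally flat and hence angle-preserving, $\Psi$ is just the Euclidean angle between the coordinate gradients). Dividing out the norms where they are nonzero yields $(\alpha_2-\alpha_1)\sin\Psi=0$, which is \eqref{eqn:alpha-sin}, and the two alternatives of the lemma are precisely its two factors vanishing. The only point that needs care --- and the mild obstacle --- is the degenerate locus where $\nabla V^{\phi\phi}$ or $\nabla V^{tt}$ vanishes, where $\Psi$ is undefined and one cannot divide by the norms; but on that locus the two gradients are trivially parallel, so the first alternative of the lemma holds automatically and the conclusion goes through pointwise.
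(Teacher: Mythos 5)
Your proposal is correct and follows essentially the same route as the paper: solve the two ``outer'' orthogonality relations for $b^{\phi\phi}=\alpha_1\,\nabla^\perp V^{\phi\phi}$ and $b^{tt}=\alpha_2\,\nabla^\perp V^{tt}$, substitute into the middle relation, and use the antisymmetry of $\langle\nabla f,\nabla^\perp g\rangle$ to collapse it to $(\alpha_2-\alpha_1)\sin\Psi=0$, which is exactly the paper's equation~\eqref{eqn:alpha-sin} from which the lemma is read off. Your explicit treatment of the degenerate locus where a gradient vanishes is a detail the paper leaves implicit, but it does not change the argument.
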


We now turn to an interpretation of \eqref{eqn:middle-block}. Consider $\{V,I^\circled{3}\}$ and denote $I^\circled{3} = I^{ijk} p_i p_j p_k$.
Then,
\begin{align*}
\{V,I^\circled{3}\}
 &= 3\left(V_x I^{xij} p_i p_j +V_y I^{yij} p_i p_j\right) \\
 &= 3\,(V_k I^{kij} p_i p_j),
\end{align*}
and analogously for $\{V^{\phi\phi},I^\circled{3}\}$ and $\{V^{tt},I^\circled{3}\}$.
With this in mind, we interpret \eqref{eqn:middle-block} as defining equations for the tensor field $K^\circled{3}(\nabla V,\cdot,\cdot)$.
There are two more equations than components of $K^\circled{3}$ and this allows us to find independent expressions for $K^\circled{3}(\nabla V^{\phi\phi},\cdot,\cdot)$ as well as $K^\circled{3}(\nabla V^{tt},\cdot,\cdot)$.
Then, if $dV^{tt}$ and $dV^{\phi\phi}$ are linearly independent, we can determine $K^\circled{3}$ in terms of derivatives of the function $\alpha= \alpha_1= \alpha_2$.
We have the following obvious identities:
\begin{subequations}\label{eqn:compatibilitaet-a_i}
\begin{align}
 K^\circled{3}(\nabla V^{\phi\phi};\nabla V^{\phi\phi},\nabla V^{tt})
 &=  K^\circled{3}(\nabla V^{tt};\nabla V^{\phi\phi},\nabla V^{\phi\phi}) \\
  K^\circled{3}(\nabla V^{\phi\phi};\nabla V^{tt},\nabla V^{tt})
  &=  K^\circled{3}(\nabla V^{tt};\nabla V^{\phi\phi},\nabla V^{tt})
\end{align}
\end{subequations}
We proceed as follows:
\begin{enumerate}
 \item Determine $K^\circled{3}$ in terms of $\alpha$ and its derivatives, if $\sin\Psi\not=0$.
 \item Determine derivatives of $\alpha$ using the symmetry in the arguments of $K^\circled{3}$. Then derive an integrability condition for $\alpha$.
 \item Combine the integrability condition with \eqref{eqn:key-block} and the \name{Ernst} equations. Show that the system does not have any solutions, using algebraic manipulations as well as prolongation-projection arguments for the system.
\end{enumerate}

\noindent First, however, consider the case $\sin\Psi=0$. In order to do this, we begin with a closer look at Killing vectors.

\subsection{Killing Vector Fields}
Assuming there is an additional linear integral on the 4-dimensional space-time, we characterize the existence of linear integrals in terms of the rank of the $2\times 3$-matrix whose columns are given by gradients of the potential components $V^{\phi\phi}$, $V^{t\phi}$ and $V^{tt}$:
\[
   \mathcal{M} = (dV^{\phi\phi},dV^{tt},dV^{t\phi}).
\]
Since the rank of $\mathcal{M}$ is the dimension of the linear space spanned by $dV^{\phi\phi}$, $dV^{tt}$, $dV^{t\phi}$, it is a geometric object and independent of the choice of coordinates.

If $dV^{t\phi}=0$, $\mathcal{M}$ might be replaced by the $2\times 2$-matrix $(dV^{\phi\phi},dV^{tt})$, which will also be denoted by $\mathcal{M}$.
Then, instead of the rank of the matrix, the determinant may be used with the obvious correspondences.
In case there is an additional linear integral present in a non-flat SAV space-time (i.e.\ in addition to $p_\phi$ and $p_t$), the rank of $\mathcal{M}$ cannot be full.
More precisely:
\begin{lemma}\label{la:add-lin}
 \begin{enumerate}
  \item[(a)] Let $(M,g)$ be in the SAV class.
     \begin{itemize}
       \item If there is an additional linear integral (Killing vector field), then the rank of $\mathcal{M}$ is~1, or the space-time is flat.
       \item Let $\rg(\mathcal{M})=1$. Then $p_y$ is a linear integral (Killing vector field) when using \name{Lewis-Papapetrou} coordinates with $R=x$.
     \end{itemize}
  \item[(b)] Let $(M,g)$ be in Weyl's class.
     \begin{itemize}
       \item Let $\rg(\mathcal{M})\leq 1$ be constant. Then there is an additional linear integral on $M$.
             In case $\rg(\mathcal{M})=1$ this vector field corresponds to $p_y$ in \name{Lewis-Papapetrou} coordinates with $R=x$; in case $\rg(\mathcal{M})=0$ the space-time is flat.
       \item If there is an additional linear integral, it is given by $p_y$ in \name{Lewis-Papapetrou} coordinates with $R=x$, if $M$ is non-flat.
     \end{itemize}
 \end{enumerate}
\end{lemma}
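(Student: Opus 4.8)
The plan is to characterize additional linear integrals through the left kernel of $\mathcal{M}$ and then read off its rank from the geometry of the potentials. A linear integral in involution with $p_\phi$ and $p_t$ is independent of $\phi$ and $t$, so in \name{Lewis-Papapetrou} coordinates it has the form $I = b_1 p_x + b_2 p_y + c_\phi p_\phi + c_t p_t$ with all coefficients depending only on $(x,y)$. First I would impose $\{H,I\}=0$ and split it by degree in $(p_x,p_y)$. The degree-one part is $\{T,c_\phi p_\phi + c_t p_t\}=0$, which, since $T$ is nondegenerate in $(p_x,p_y)$, forces $c_\phi$ and $c_t$ to be constant; hence the integral is genuinely additional precisely when $W:=(b_1,b_2)\neq 0$. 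The degree-two part $\{T,b_1 p_x + b_2 p_y\}=0$ says that $W$ is a Killing field of $g_\text{red}$, while the degree-zero part is exactly \eqref{eqn:bottom-block}, whose scalar-product form gives $\langle \nabla V^{\phi\phi},W\rangle = \langle \nabla V^{tt},W\rangle = \langle \nabla V^{t\phi},W\rangle = 0$. Thus $W$ lies in the left kernel of $\mathcal{M}$.

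From here the first bullet of (a) is immediate: an additional integral provides a nonzero $W$ annihilating the columns of $\mathcal{M}$, so $\rg(\mathcal{M})\leq 1$. To dispose of the case $\rg(\mathcal{M})=0$ I would observe that then $V^{\phi\phi}$, $V^{tt}$ and $V^{t\phi}$ are all constant; inserting the explicit potentials \eqref{eqns:reduced-space-metrics}, constancy of $V^{tt}$ forces $U$ constant and then constancy of $V^{\phi\phi}$ forces $R$ constant, which by the discussion following the \name{Ernst} equations means the space-time is flat. This establishes the dichotomy ``$\rg(\mathcal{M})=1$ or flat''.

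For the second bullet of (a), and for the converse statements in (b), I would compute in coordinates with $R=x$, where $g_\text{red}$ is conformally flat and the potentials are as in \eqref{eqns:reduced-space-metrics}. A direct computation of $\nabla V^{\phi\phi}$ and $\nabla V^{tt}$ shows that the two gradients are parallel exactly when $U_y=0$ (the cross term $V^{t\phi}$ being absent in Weyl's class). When $U_y=0$, the \name{Ernst} relations \eqref{eqn:ernst} give $\gamma_y=0$ as well, so the conformal factor $e^{2U-2\gamma}$ and every potential component are independent of $y$; consequently $\partial_y$ is a Killing field of $g_\text{red}$ annihilating the potentials, and it lifts to a space-time Killing vector whose linear integral is $p_y$. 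This computation is reversible, which yields the converse in (b): if $\rg(\mathcal{M})\leq 1$ is constant then either $U_y\equiv 0$, giving the integral $p_y$ in the case $\rg(\mathcal{M})=1$, or all potentials are constant and the metric is flat in the case $\rg(\mathcal{M})=0$.

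The main obstacle I anticipate is the non-static SAV setting of part (a), where $\omega\neq 0$ and the reduced potentials $V^{ab}$ are the entries of the inverse of the $\{\phi,t\}$-block of \eqref{eqn:sav-metric}. There the parallelism condition $\rg(\mathcal{M})=1$ no longer collapses to the single equation $U_y=0$, so one must combine the three scalar-product relations to extract a $y$-translation symmetry after the normalization $R=x$; one must also verify carefully that the Killing field $W$ of $g_\text{red}$ genuinely lifts, with $c_\phi=c_t=0$, to a Killing field of the full four-dimensional metric rather than merely to a metric integral of the reduced Hamiltonian. By comparison, the remaining steps --- the explicit gradients and the use of \eqref{eqn:ernst} --- are routine.
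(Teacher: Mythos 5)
Your strategy is the same as the paper's: split $\{H,I\}=0$ by degree in $(p_x,p_y)$, read the degree-zero block as orthogonality of $W=(b_1,b_2)$ to the three potential gradients (so $W$ lies in the left kernel of $\mathcal{M}$ and $\rg(\mathcal{M})\leq 1$), identify rank~$0$ with flatness via constancy of the potentials and of $R$, and, for the converse, work in coordinates with $R=x$, where in Weyl's class parallelism of $\nabla V^{\phi\phi}$ and $\nabla V^{tt}$ is equivalent to $U_y=0$, so that the relations \eqref{eqn:ernst} give $\gamma_y=0$ and $p_y$ is a Killing integral. For the first bullet of (a), for the rank-$0$ case, and for part (b) this is correct and essentially reproduces the paper's argument.

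The genuine gap is the second bullet of part (a). The lemma asserts it for the whole SAV class, i.e.\ also when $\omega\neq 0$, and you prove it only in the static case, explicitly deferring the stationary case as ``the main obstacle I anticipate.'' That deferred case is precisely the nontrivial content of the paper's proof, and it is settled by a concrete computation with all three relations \eqref{eqn:kv-orthogonalities}: from $\langle\nabla V^{\phi\phi},\nabla^\perp V^{t\phi}\rangle=0$ one obtains $x\,\omega_x\,U_y-(1+x\,U_x)\,\omega_y=0$, i.e.\ $(\omega_x,\omega_y)=\kappa\,(1+x\,U_x,\;x\,U_y)$ for some function $\kappa$; substituting this into $\langle\nabla V^{\phi\phi},\nabla^\perp V^{tt}\rangle=0$ yields $x^2e^{4U}U_y=0$, hence $U_y=0$, and then $\omega_y=\kappa\,x\,U_y=0$; finally the secondary \name{Ernst} equations of the stationary case, evaluated with $U_y=\omega_y=0$, reduce to $\gamma_y\,x\,e^{4U}=0$, forcing $\gamma_y=0$. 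Thus every metric function depends on $x$ alone, so $\partial_y$ is a Killing vector of the four-dimensional metric and $p_y$ a linear integral. Note that this also dissolves the ``lifting'' worry you raise: there is no need to correct the reduced Killing field by terms $c_\phi p_\phi+c_t p_t$, because once $U$, $\omega$, $\gamma$ are $y$-independent, $p_y$ already Poisson-commutes with the full Hamiltonian. Without this computation the SAV statement of part (a) --- which is also what lemma~\ref{la:rk1} quotes when it asserts the existence of the integral $p_y$ --- remains unproved.
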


\begin{proof}
Part (a).
For linear integrals we only have two polynomials after taking coefficients w.r.t.\ $(p_x,p_y)$ (they are similar to the polynomials of degree~0 and~2 in \eqref{eqn:blocks}).
Let us denote the components of the $(p_x,p_y)$-linear part of the integral as
\begin{equation}\label{eqn:split-b-kv}
 I^\circled{1} = b_1\,p_x+b_2\,p_y, \qquad b=(b_1,b_2)
\end{equation}
The zeroth order equations read:
\begin{equation}
 \langle \nabla\,V^{\phi\phi}, b \rangle	= 0, \qquad
 \langle \nabla\,V^{t\phi}, b \rangle	= 0, \qquad
 \langle \nabla\,V^{tt}, b \rangle = 0
\end{equation}
We conclude that the following relations must hold:
\begin{equation}\label{eqn:kv-orthogonalities}
 \langle \nabla\,V^{\phi\phi}, \nabla^\perp V^{t\phi} \rangle	= 0, \qquad
 \langle \nabla\,V^{\phi\phi}, \nabla^\perp V^{tt} \rangle = 0, \qquad
 \langle \nabla\,V^{t\phi}, \nabla^\perp V^{tt} \rangle	= 0
\end{equation}
This means, the potential gradients are pointing all in the same direction, i.e.\ they are pairwise linearly dependent.
Hence, the rank of the potential gradient matrix is~1, provided the space-time is non-flat. In the flat case we may have \name{Lewis-Papapetrou} coordinates with the parameter $R$ constant, making it impossible to choose $R=x$. We therefore exclude the flat case from our considerations.
This concludes the proof of claim one and establishes a necessary criterion for the existence of Killing vector fields in everywhere non-flat space-times.
\vskip 0.3cm
Now, since the rank of the potential gradient matrix $\mathcal{M}$ is~1, all rows as well as all columns have to be linearly dependent.
This again gives us relations \eqref{eqn:kv-orthogonalities}, meaning that $\nabla\,V^{\phi\phi}$, $\nabla\,V^{t\phi}$ and $\nabla\,V^{tt}$ are pairwise linearly dependent.
First, let us assume $\omega\not=0$. We consider
\[
 \langle \nabla\,V^{\phi\phi}, \nabla^\perp V^{t\phi} \rangle =0
\]
This equation amounts to the requirement
\[
 x\,\omega_x\,U_y -(1+x\,U_x)\,\omega_y =0
\]
or the relation
\[
 \left(\begin{array}{c} \omega_x \\ \omega_y \end{array}\right)
 = \kappa\ \left(\begin{array}{c} 1+x\,U_x \\ x\,U_y \end{array}\right).
\]
with a scalar function $\kappa$ to be determined.
Inserting this into the requirement
\[
 \langle \nabla\,V^{\phi\phi}, \nabla^\perp V^{tt} \rangle =0
\]
yields the relation
\[
 U_y\,x^2\,e^{4U} = 0
\]
and forces $U$ to be a function of $x$ only.
Turning back to the relations for $\omega$, we see that $\omega_y=0$, so $\omega$ also is a function of $x$ only.

Recalling the convention $R=x$, the metric depends on $x$ only, if $\gamma$ only depends on $x$, or if it is constant.
We infer the secondary \name{Ernst} equations,
\begin{align*}
 4x^2\,e^{4U}\,U_x^2-\omega_x^2-4x\,e^{4U}\,\gamma_x	&= 0, \\
 \gamma_y\ x\,e^{4U}																	&= 0.
\end{align*}
Consider the latter equation. It means $\gamma_y=0$, so we are done.
Since the metric does not depend on $y$, $p_y$ must be an integral, and therefore provides a Killing vector field.

Now, assume $\omega=0$. Then $\langle\nabla V^{\phi\phi},\nabla^\perp V^{t\phi}\rangle=0$ trivially and we have to go a slightly different way of reasoning. We consider
\[
  \langle\nabla V^{\phi\phi},\nabla^\perp V^{tt}\rangle=0.
\]
It follows that
\[
  x^2\,e^{4U}\,U_y =0,
\]
which means $U_y=0$ (on the entire neighborhood). Conclude $U=U(x)$, and then $\gamma=\gamma(x)$.
Thus, the metric is a function of $x$ only and $p_y$ is a linear integral.
This concludes the proof of part (a).
\vskip 0.5cm
For part (b) let us first remark that if an additional linear integral exists in Weyl's class, it must be a multiple of $p_y$, or the metric is flat.
Two cases need to be checked:
Firstly, if there is exactly one additional linear integral, it is a multiple of $p_y$. Secondly, if there are two (independent) additional linear integrals, there are three (say $b^{(k)},k=1,2,3$). Looking at the equations $\langle\nabla\,V^{ij},b^{(k)}\rangle=0$, this forces all gradients $\nabla V^{ij}$ to be zero (or, equivalently, $dV^{ij}=0$). Hence, $V$ is constant. Thus $U$ and $\omega$ are constant, in contradiction to the assumption $R=x$. Therefore, the metric is flat.
\vskip 0.3cm
With this remark, the first claim of part (b) follows immediately from part~(a), keeping in mind that rank~0 corresponds to flat space.
The second claim of part~(b) follows immediately from the second statement of part~(a).
\end{proof}

\section{Proof of the Main Theorem}
For the proof we will w.l.o.g.\ assume constant rank for the matrix $\mathcal{M}$.
If the space-time $M$ is not of constant $\rg\,\mathcal{M}$, we may still consider the subsets of points in $M$ with constant rank $0$, $1$, or $2$.
We may then work with the sets of their inner points ignoring the remaining points of $M$, which amount only to a null set w.r.t.\ the measure induced by the volume form on $M$.
Proving the theorem on a dense set is sufficient because if a degree-3 polynomial integral is identical to a linear combination of products of $H$, $p_\phi$ and $p_t$ on an open subset, this is true everywhere.

The proof will be completed in two steps.
First we consider space-times with $\rg\,\mathcal{M}=1$, then the case $\rg\,\mathcal{M}=2$.
As we have seen, the \mbox{rank-1} case is the case when there is one additional Killing vector field. Rank-2 is the case if no additional Killing vector field exists (assuming non-flatness).

\begin{lemma}\label{la:rk1}
If $\rg\,\mathcal{M}=1$, then any third degree integral is reducible by at least one degree.
\end{lemma}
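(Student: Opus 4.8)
The plan is to exploit the extra symmetry that becomes available precisely in the rank-$1$ situation. By Lemma~\ref{la:add-lin}, $\rg\,\mathcal{M}=1$ means there is an additional \name{Killing} vector field, and in \name{Lewis-Papapetrou} coordinates with $R=x$ it is $p_y$; equivalently $U=U(x)$, hence $\gamma=\gamma(x)$, so that the reduced metric $g_\text{red}=e^{2U-2\gamma}(dx^2+dy^2)$ and both potential coefficients $V^{\phi\phi},V^{tt}$ depend on $x$ only. In particular $\nabla V^{\phi\phi}$ and $\nabla V^{tt}$ both point in the $x$-direction, so this is exactly the degenerate case $\sin\Psi=0$ that was set aside just before the statement of the lemma. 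First I would record what this forces on the blocks~\eqref{eqn:blocks}: since $\nabla^\perp V^{\phi\phi}$ and $\nabla^\perp V^{tt}$ now point in the $y$-direction, the solution $b^{\phi\phi}=\alpha_1\nabla^\perp V^{\phi\phi}$, $b^{tt}=\alpha_2\nabla^\perp V^{tt}$ of the bottom block~\eqref{eqn:bottom-block} has vanishing $p_x$-component, i.e.\ the $\phi\phi$- and $tt$-parts of $I^\circled{1}$ are ``pure $p_y$'' (the $t\phi$-part being a separate \name{Killing} vector handled as in the text). The middle block~\eqref{eqn:middle-block} then constrains only the $p_x$-contractions of $I^\circled{3}$, leaving its pure $p_y^3$-component free.

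The heart of the argument is to reduce to integrals that are independent of $y$. Since $p_y$ is now a \name{Killing} vector, $\{p_y,I\}=-\partial_y I$ is again a third-degree integral whenever $I$ is; thus $\partial_y$ is a linear endomorphism of the finite-dimensional space of degree-$3$ integrals, and I would decompose this space into generalized eigenspaces of $\partial_y$. On the part where $\partial_y$ acts nilpotently the top $y$-power gives a $y$-independent integral, and I would reduce the whole problem to this case. A $y$-independent integral \name{Poisson}-commutes with $p_y$ as well as with $p_\phi$ and $p_t$, hence is an integral of the effectively one-dimensional Hamiltonian $H=e^{2\gamma-2U}p_x^2+W(x)$, where $W(x)=e^{2\gamma-2U}p_y^2+V^{\phi\phi}(x)p_\phi^2+V^{tt}(x)p_t^2$ and $p_y,p_\phi,p_t$ are treated as parameters. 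For a one-dimensional natural system the polynomial-in-$p_x$ first integrals are generated by the Hamiltonian together with the parameters: solving the corresponding one-dimensional \name{Poisson} hierarchy order by order (the leading order fixes the top coefficient up to a power of the conformal factor, the lower orders feeding back through $W$) shows that every such integral is a polynomial in $H,p_y,p_\phi,p_t$. This makes it reducible, in fact totally reducible, consistent with the presence of three commuting \name{Killing} vectors and the metric; the product representation then lifts to the full integral because the $p_x$-free blocks were already matched in the previous step.

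The hard part will be the control of the $y$-dependence, i.e.\ the behaviour of $\partial_y$ on the integral space. This operator is \emph{not} nilpotent in general: if the reduced surface has constant curvature it carries extra \name{Killing} vector fields whose components depend on $y$ through exponential or trigonometric factors, producing nonzero eigenvalues of $\partial_y$ and hence genuinely $y$-dependent integrals that the nilpotent reduction does not reach. I expect this to be the delicate point, and I would resolve it by showing that such a reduced geometry is incompatible with the non-flat vacuum constraint: feeding the assumption of a constant-curvature (or otherwise separable) reduced surface into the \name{Ernst} relations~\eqref{eqn:ernst} should force $U$, and thereby the space-time, to be flat, which is excluded by the choice $R=x$. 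With the non-flat \name{Ernst} constraint in force, $\partial_y$ has no nonzero eigenvalue on the integral space, the reduction to $y$-independent integrals goes through, and reducibility by one degree follows.
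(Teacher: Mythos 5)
Your first paragraph is correct and coincides with the opening of the paper's own proof: $\rg\,\mathcal{M}=1$ gives the Killing vector $p_y$, all metric and potential coefficients depend on $x$ only, the $\phi\phi$- and $tt$-parts of $I^\circled{1}$ are forced to be multiples of $p_y$, and \eqref{eqn:middle-block} constrains only the $p_x$-contractions of $I^\circled{3}$. But at exactly this point the proof is essentially finished, and you walk away from it. Writing $F^\circled{3}=F^{ijk}p_ip_jp_k$ and $F^\circled{1}_{ab}=h_k\,p_y$, each equation of \eqref{eqn:middle-block} reads $3V^{ab}_x F^{xij}p_ip_j=-\{T,h_k\}\,p_y$ (because $V^{ab}_y=0$); the right-hand side is divisible by $p_y$, so the $p_x^2$-coefficient forces $F^{xxx}=0$, i.e. $F^\circled{3}=F\,p_y$ with $F$ quadratic in $(p_x,p_y)$. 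Then \eqref{eqn:key-block} gives $\{T,F\}\,p_y=0$, so $F$ is a quadratic integral of $T$, and \eqref{eqn:middle-block} says precisely that $\tilde F=F+h_1p_\phi^2+h_2p_tp_\phi+h_3p_t^2$ satisfies the equations of remark~\ref{rmk:quadratic-integrals}. Hence the whole odd-parity integral equals $p_y\,\tilde F$, a product of a linear and a quadratic integral, and is reducible \emph{regardless of how its coefficients depend on $y$}; no control of the $y$-dependence is ever needed.

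Your replacement machinery has two genuine holes, and they sit exactly where the real content of the lemma lies. First, the Jordan-chain step is unjustified: if $\partial_y I=J$ with $J$ a $y$-independent (reducible) integral, then $I=yJ+I_0$, where $I_0$ is \emph{not} an integral but solves the inhomogeneous equation $\{H,I_0\}=\pm\,2e^{2\gamma-2U}p_y\,J$; reducibility of $J$ says nothing direct about $I$, and you give no mechanism for climbing back up the chain while preserving reducibility. Second, the integrals your reduction cannot reach are precisely the ones the lemma must handle: the quadratic factor $\tilde F$ above may have $y$-dependent coefficients and may even be a non-reducible quadratic integral (the paper explicitly notes this possibility), so genuinely $y$-dependent third-degree integrals of the form $p_y\tilde F$ are to be expected in the rank-1 case; they are not tied to a constant-curvature reduced surface, so the Ernst-equation exclusion you defer the difficulty to attacks the wrong object — what produces $y$-dependent integrals is extra quadratic Killing tensors of the reduced system, not only extra Killing vectors. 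A symptom of the overreach: if your program worked as described, it would show that in the rank-1 case every third-degree integral is a polynomial in $H$, $p_y$, $p_\phi$, $p_t$, i.e.\ totally reducible — a conclusion strictly stronger than the lemma, and one the paper deliberately refrains from claiming. Your one-dimensional hierarchy argument for $y$-independent integrals is fine (it needs only that $W'\neq0$ as a polynomial in the parameters, which rank $1$ guarantees), but it covers only a proper subspace of the integrals in question.
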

\begin{proof}
By the hypothesis, there is the linear integral $p_y$ in \name{Lewis-Papapetrou} coordinates with $R=x$.
Consider \eqref{eqn:middle-block}
\begin{align*}
 \{V^{\phi\phi},F^\circled{3}\}+\{T,F^\circled{1}_{\phi\phi}\}		&= 0 \\
 \{V^{t\phi},F^\circled{3}\}+\{T,F^\circled{1}_{t\phi}\}					&= 0 \\
 \{V^{tt},F^\circled{3}\}+\{T,F^\circled{1}_{tt}\}								&= 0
\end{align*}
Each $F^\circled{1}$ is a multiple of $p_y$, so
\[
 F^\circled{1}_{\phi\phi}=h_1\,p_y,\qquad
 F^\circled{1}_{t\phi}=h_2\,p_y,\qquad
 F^\circled{1}_{tt}=h_3\,p_y.
\]
This means that the equations in \eqref{eqn:middle-block} are of the form
\begin{align*}
 \{V^{\phi\phi},F^\circled{3}\}+\{T,h_1\}\,p_y	&= 0 \\
 \{V^{t\phi},F^\circled{3}\}+\{T,h_2\}\,p_y			&= 0 \\
 \{V^{tt},F^\circled{3}\}+\{T,h_3\}\,p_y				&= 0
\end{align*}
The leading order term $F^\circled{3}$ hence is of the form
\[
 F^\circled{3} = p_x\ ((\dots)\,p_y)+f\,p_y^3 =: F\,p_y
\]
where the leading $p_x$ is because the potential gradients (or, equivalently, the differentials $dV^{ab}$) have only $p_x$ components. The final contribution $f\,p_y^3$ accounts for the fact that \eqref{eqn:middle-block} only specifies components with at least one $p_x$.

\noindent Now consider \eqref{eqn:key-block},
\[
 \{T,F^\circled{3}\} = \{T,F\,p_y\} = \{T,F\}\,p_y \stackrel{!}{=} 0.
\]
This means $\{T,F\}=0$, so $F$ is a quadratic integral on the reduced space. It follows that it can be extended to an integral on the initial space-time, because of the fact that
\[
 \{V^{\phi\phi},F^\circled{3}\} = \{V^{\phi\phi},F\,p_y\} = \{V^{\phi\phi},F\}\,p_y
\]
and so on, so we have from \eqref{eqn:middle-block} the equations
\begin{align*}
 \{V^{\phi\phi},F\}+\{T,h_1\}	&= 0 \\
 \{V^{t\phi},F\}+\{T,h_2\}			&= 0 \\
 \{V^{tt},F\}+\{T,h_3\}				&= 0
\end{align*}
which makes $\tilde{F}=F+h_1\,p_\phi^2+h_2\,p_\phi\,p_t+h_3\,p_t^2$ a quadratic integral on the initial space-time (see remark~\ref{rmk:quadratic-integrals} below).
Note that $\tilde{F}$ might still be reducible, but can be non-reducible as well.
\end{proof}

\begin{remark}\label{rmk:quadratic-integrals}
An even-parity quadratic integral
$I=I^\circled{2}+I^\circled{0}_{\phi\phi}\,p_\phi^2
								+I^\circled{0}_{t\phi}\,p_t\,p_\phi
								+I^\circled{0}_{tt}\,p_t^2$
satisfies the polynomial equations
\begin{subequations}\label{eqns:quadratic-integrals}
\begin{align}
 \{T,I^\circled{2}\}	&= 0 \\[0.2cm]
 \{V^{\phi\phi},I^\circled{2}\}+\{T,I^\circled{0}_{\phi\phi}\}	&= 0 \\
 \{V^{t\phi},I^\circled{2}\}+\{T,I^\circled{0}_{t\phi}\}	&= 0 \\
 \{V^{tt},I^\circled{2}\}+\{T,I^\circled{0}_{tt}\}	&= 0
\end{align}
\end{subequations}
\end{remark}
\begin{proof}
Decompose $\{H,I\}=0$ by setting each component homogeneous in $(p_x,p_y)$ zero. The first equation is the component of degree~3, the other three equations are  components of degree~1.
\end{proof}

We now turn to the case when there is no additional linear integral in involution with the others.
From now on, we will always assume to work in Weyl's class.

Keeping in mind the considerations of the previous sections, we see that this case requires $\rg\,\mathcal{M}=2$.
Rank~2 requires $\nabla\,V^{\phi\phi}$ and $\nabla\,V^{tt}$ to be linearly independent.
Then, recalling equation \eqref{eqn:alpha-sin}, the scaling functions $\alpha_1$ and $\alpha_2$ are equal for Weyl's class.
For simplicity we therefore introduce the new function $\alpha=\alpha_1=\alpha_2$ (step (iii) in the list of section~\ref{sec:method}), so
\[
  b^{\phi\phi}=\alpha\,\nabla^\perp\,V^{\phi\phi}, \qquad 
  b^{tt}=\alpha\,\nabla^\perp\,V^{tt}.
\]

\begin{lemma}
Derivatives of $\alpha$ are determined by differential equations of the form
\begin{align*}
 \alpha_x	&= A\,\alpha \\
 \alpha_y	&= B\,\alpha
\end{align*}
where $A$ and $B$ are algebraic expressions determined by $V^{tt}_x$, $V^{tt}_y$, $V^{\phi\phi}_x$ and $V^{\phi\phi}_y$, which do not contain any higher-than-second derivatives of components of the potential $V$.
\end{lemma}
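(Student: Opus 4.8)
The plan is to turn the two symmetry identities \eqref{eqn:compatibilitaet-a_i} into a $2\times2$ linear system for the pair $(\alpha_x,\alpha_y)$ and then invert it. First I would rewrite the middle-block equations \eqref{eqn:middle-block} as
$$\{V^{\phi\phi},I^\circled{3}\}=-\{T,I^\circled{1}_{\phi\phi}\},\qquad \{V^{tt},I^\circled{3}\}=-\{T,I^\circled{1}_{tt}\},$$
and combine them with the computation $\{V,I^\circled{3}\}=3\,K^\circled{3}(\nabla V,\cdot,\cdot)$ to express the two contracted tensors $K^\circled{3}(\nabla V^{\phi\phi},\cdot,\cdot)$ and $K^\circled{3}(\nabla V^{tt},\cdot,\cdot)$. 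Since in the rank-$2$ case $b^{\phi\phi}=\alpha\,\nabla^\perp V^{\phi\phi}$ and $b^{tt}=\alpha\,\nabla^\perp V^{tt}$ (the equality $\alpha_1=\alpha_2=\alpha$ coming from \eqref{eqn:alpha-sin}), and since $\{T,\cdot\}$ acting on a momentum-linear term is the Killing operator of $g_\text{red}$, each contraction equals, up to a constant, the symmetrized covariant derivative of $\alpha\,\nabla^\perp V^{ab}$. By the Leibniz rule this produces exactly two kinds of terms: $\alpha$ times first and second derivatives of $V^{ab}$ (with Christoffel symbols of the conformally flat $g_\text{red}$, themselves first derivatives of $U$), and $\alpha_x,\alpha_y$ times components of $\nabla^\perp V^{ab}$. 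Crucially the dependence on $\alpha$ is homogeneous-linear, with no $\alpha^2$ term and no inhomogeneous part, and no derivative of $V$ beyond second order occurs.

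Because $\rg\,\mathcal{M}=2$, the covectors $dV^{\phi\phi}$ and $dV^{tt}$ are linearly independent and form a frame, so all four independent components of $K^\circled{3}$ are recovered from the six scalar contractions $K^\circled{3}(\nabla V^{ab};\nabla V^{cd},\nabla V^{ef})$; the two relations expressing the over-determinacy of these six scalars are precisely \eqref{eqn:compatibilitaet-a_i}. Substituting the expressions found above into \eqref{eqn:compatibilitaet-a_i} then yields two scalar equations, each of the shape
$$c_1\,\alpha_x+c_2\,\alpha_y=c_0\,\alpha,\qquad d_1\,\alpha_x+d_2\,\alpha_y=d_0\,\alpha,$$
where the coefficients $c_i,d_i$ are algebraic in the first and second derivatives of $V^{\phi\phi}$ and $V^{tt}$ (equivalently, via the \name{Ernst} relations \eqref{eqn:ernst}, in derivatives of $U$) and do not involve $\alpha$.

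The decisive step is to show that the coefficient matrix $\left(\begin{smallmatrix}c_1&c_2\\ d_1&d_2\end{smallmatrix}\right)$ is invertible, and I expect this to be the main obstacle: a priori the two identities could collapse to a single equation, leaving one derivative of $\alpha$ undetermined. Here the rank-$2$ hypothesis must enter essentially, since the coefficients of $(\alpha_x,\alpha_y)$ arise from contracting $\nabla^\perp V^{\phi\phi}$ and $\nabla^\perp V^{tt}$ against the frame $\{\nabla V^{\phi\phi},\nabla V^{tt}\}$, so the determinant of the system should factor through a nonzero multiple of $\langle\nabla V^{\phi\phi},\nabla^\perp V^{tt}\rangle^2$ (equivalently of $\sin^2\Psi$), which vanishes exactly when the gradients are dependent. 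Once invertibility is confirmed, solving by Cramer's rule gives $\alpha_x=A\,\alpha$ and $\alpha_y=B\,\alpha$ with $A,B$ rational in the $c_i,d_i$, hence algebraic in the derivatives of the potential up to second order and free of $\alpha$ and of any higher derivatives of $V$, as claimed. The remaining work is the explicit but routine computation of the Killing operator together with the bookkeeping needed to verify that the $\sin\Psi$ factor genuinely appears in the determinant.
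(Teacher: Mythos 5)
Your proposal follows essentially the same route as the paper: solve the middle-block equations \eqref{eqn:middle-block} for the components of $I^\circled{3}$ using the frame $\{\nabla V^{\phi\phi},\nabla V^{tt}\}$, and convert the redundancy expressed by \eqref{eqn:compatibilitaet-a_i} into homogeneous linear first-order equations for $\alpha$. The invertibility you flag as the main remaining obstacle is exactly what the paper's computation settles, and in its favor: after elimination the system comes out already decoupled, with $\alpha_x$ and $\alpha_y$ each carrying the same scalar coefficient $\nu=\langle\nabla V^{tt},\nabla^\perp V^{\phi\phi}\rangle\neq 0$, so the determinant of your $2\times 2$ system is $\nu^2$, a nonzero multiple of $\sin^2\Psi$, precisely as you predicted.
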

\begin{proof}
We use the relations \eqref{eqn:compatibilitaet-a_i}, i.e.\ we use the six equations from \eqref{eqn:middle-block} and combine them in a straightforward way to find expressions for the coefficients $a_0$ through to $a_3$ of $I_T=\sum_i a_i p_x^{d-i}p_y^i$.
In this way, we find two different expressions for~$a_1$, and two for~$a_2$, corresponding to the above identities.
The expressions are polynomials in derivatives of the potential $V$, i.e.\ they are determined by $V^{tt}_x$, $V^{tt}_y$, $V^{\phi\phi}_x$ and $V^{\phi\phi}_y$ and do not contain derivatives of order higher than~2.
The coefficients of the $a_i$ are just integer multiples of $\nu=\langle\nabla\,V^{tt},\nabla^\perp\,V^{\phi\phi}\rangle$, which is non-zero because we required $\nabla\,V^{tt}$ and $\nabla\,V^{\phi\phi}$ to be linearly independent.

We can then eliminate $a_1$ and $a_2$ and deduce two equations which have the following form:
\begin{align*}
\langle\nabla\,V^{tt},\nabla^\perp V^{\phi\phi}\rangle\ \alpha_x
 &= (\dots)\,\alpha \\
\langle\nabla\,V^{tt},\nabla^\perp V^{\phi\phi}\rangle\ \alpha_y
 &= (\dots)\,\alpha
\end{align*}
The expressions abbreviated by $(\dots)$ are polynomials in derivatives of $V$ of at most second order.
Dividing by the non-zero coefficient of the $\alpha$-derivatives yields the required result.
\end{proof}
The integrability condition for $\alpha$ is a necessary requirement for the existence of non-reducible Killing tensor fields (step (iv) in the list of section~\ref{sec:method}):
\begin{lemma}\label{la:nec-crit}
Let $\rg\,\mathcal{M}=2$ and $\omega=0$, but $\nabla\,V^{\phi\phi},\nabla\,V^{tt}\not=0$. If there is a Killing tensor field of valence~3, then either $\alpha=0$ or $A_y-B_x=0$.
\end{lemma}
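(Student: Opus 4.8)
The plan is to treat the two first-order equations for $\alpha$ supplied by the preceding lemma as an overdetermined linear system and to impose the compatibility condition coming from the equality of mixed second partials. Recall that, under the standing hypotheses $\rg\,\mathcal{M}=2$ and $\nabla V^{\phi\phi},\nabla V^{tt}\neq 0$, the existence of a valence-3 Killing tensor field forces the parameter function governing the construction to satisfy
\begin{align*}
 \alpha_x &= A\,\alpha, \\
 \alpha_y &= B\,\alpha,
\end{align*}
where $A$ and $B$ are the prescribed algebraic expressions in the (at most second-order) derivatives of $V^{\phi\phi}$ and $V^{tt}$. Since $\alpha$ is a smooth function, its mixed second derivatives must agree, and this equality $\alpha_{xy}=\alpha_{yx}$ is precisely the integrability requirement that the system must meet in order to admit a solution.

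Next I would compute both mixed derivatives by differentiating the two equations and substituting back. Differentiating the first equation with respect to $y$ and inserting $\alpha_y=B\alpha$ gives $\alpha_{xy}=A_y\,\alpha+A\,\alpha_y=(A_y+AB)\,\alpha$; symmetrically, differentiating the second with respect to $x$ and inserting $\alpha_x=A\alpha$ gives $\alpha_{yx}=(B_x+BA)\,\alpha$. Equating the two and cancelling the common scalar term $AB\,\alpha=BA\,\alpha$ leaves the pointwise identity
\[
 (A_y-B_x)\,\alpha = 0.
\]
This immediately yields the claimed dichotomy: at every point either $\alpha=0$ or $A_y-B_x=0$, which is the assertion of the lemma.

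I do not expect a serious obstacle here, since the argument is the standard Frobenius-type integrability condition for a linear first-order system. The only point that I would make sure to address is that $A$ and $B$ are genuinely well-defined smooth functions on the region under consideration: this is guaranteed because, as noted in the proof of the preceding lemma, their denominators are multiples of $\nu=\langle\nabla V^{tt},\nabla^\perp V^{\phi\phi}\rangle$, which is non-zero precisely under the standing assumption $\rg\,\mathcal{M}=2$ (the linear independence of $\nabla V^{\phi\phi}$ and $\nabla V^{tt}$). The hypothesis $\omega=0$ enters only to place us in Weyl's class, where equation \eqref{eqn:alpha-sin} forces $\alpha_1=\alpha_2=\alpha$, so that a single parameter function indeed controls the whole construction and the two derivative relations above are the correct ones to differentiate.
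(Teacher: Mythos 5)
Your proposal is correct and is essentially the paper's own proof: both apply the equality of mixed partials to the system $\alpha_x=A\,\alpha$, $\alpha_y=B\,\alpha$, note that the $AB\,\alpha$ terms cancel, and conclude $(A_y-B_x)\,\alpha=0$. Your additional remarks on the well-definedness of $A$ and $B$ (via $\nu\neq 0$ under the rank-2 hypothesis) are sound and merely make explicit what the paper leaves to the preceding lemma.
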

We note that in case $\alpha=0$ the integral $F_3=F^\circled{3}+F^\circled{1}=0$, so the lemma actually provides a necessary criterion for the existence of non-trivial Killing tensor fields of valence~3. 
\begin{proof}[Proof of lemma \ref{la:nec-crit}]
Compute
\begin{align*}
 (\alpha_x)_y-(\alpha_y)_x
 &= A_y\,\alpha+A\,\alpha_y -B_x\,\alpha-B\,\alpha_x \\
 &= (A_y-B_x)\,\alpha +(AB-BA)\,\alpha \\
 &= (A_y-B_x)\,\alpha
\end{align*}
\end{proof}
We give an example where this idea provides information on the reducibility of linear integrals:

\begin{example}
The Zipoy-Voorhees family of metrics is a family in Weyl's class that is parametrized by a non-negative number~$\delta$.

\noindent We can use the method as described above, but we take $H$ in a modified form, namely
\[
   H = \frac{p_x^2}{2\Omega_1}+\frac{p_x^2}{2\Omega_1}+V^{\phi\phi}\,p_\phi^2+V^{tt}\,p_t^2
\]
The Zipoy-Voorhees metric satisfies, in prolate spheroidal coordinates:
\begin{align*}
 \Omega_1	&= \frac{1}{2} \left(\frac{x^2-1}{x^2-y^2}\right)^{\delta^2} \left(\frac{x+1}{x-1}\right)^\delta \frac{x^2-y^2}{x^2-1} \\
 \Omega_2	&= \frac{1}{2} \left(\frac{x^2-1}{x^2-y^2}\right)^{\delta^2} \left(\frac{x+1}{x-1}\right)^\delta \frac{x^2-y^2}{1-y^2} \\
 V^\phi		&= \left( \left(\frac{x+1}{x-1}\right)^\delta\ (x^2-1)\,(1-y^2)\right)^{-1} \\
 V^t			&= -\left(\frac{x+1}{x-1}\right)^\delta
\end{align*}

Taking an approach similar to lemma \ref{la:nec-crit}, we first check that $\det\,\mathcal{M}\not=0$. We find the following:
\[
 \det\,\mathcal{M} = 0 \quad\Leftrightarrow\quad
 \delta^2\,y^2\,\frac{x^8-4x^6y^2+6y^4x^4-4y^6x^2+y^8}{(x-1)^2\,(x^2-1)^4\,(-1+y^2)^6\,(x+1)^2} = 0
\]
which obviously is not true for generic $x,y$, if $\delta\not=0$.
Then we compute the necessary criterion as in lemma~\ref{la:nec-crit}. Using computer algebra (Maple 18), we find
\[
 A_y-B_x \stackrel{!}{=} 0 \quad\Leftrightarrow\quad
 \frac{(x-y)^4\,(x+y)^4\,(-3\delta\,x^2+4\delta^2\,x+2x-3\delta)\,\delta^2\,y}{(x^2-1)^8\,(y^2-1)^6} \stackrel{!}{=} 0
\]
which is not true for generic $x,y$ since $\delta>0$. We therefore must conclude $\alpha=0$, which means the integral of degree~3 is zero.
\end{example}

We now turn to step (v) in the list at the begining of section~\ref{sec:method}:
\begin{lemma}\label{la:x-coeffs_polynomial}
A polynomial equation of degree $N>0$ for a function $f(x,y)$ with coefficients that depend on $x$ only, is independent of $y$, so $f=f(x)$.
\end{lemma}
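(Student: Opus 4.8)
The plan is to reduce this to a pointwise, one-variable statement and then patch by continuity. First I would fix an arbitrary value $x=x_0$ at which the leading coefficient does not vanish, $c_N(x_0)\neq 0$, and regard the hypothesised relation as a polynomial equation in the single variable $f$ whose coefficients are the (now constant) numbers $c_i(x_0)$. Since this polynomial has degree exactly $N\geq 1$, it is nonconstant and therefore has at most $N$ distinct roots.

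The crucial observation is that, for this fixed $x_0$, the number $f(x_0,y)$ must satisfy the equation for every admissible $y$, hence must lie in this finite set of roots. The map $y\mapsto f(x_0,y)$ thus takes values in a finite set; being continuous on a connected $y$-domain, its image is connected, and a connected subset of a finite (hence totally disconnected) set is a single point. Consequently $y\mapsto f(x_0,y)$ is constant, so $f_y(x_0,y)=0$ for this value of $x$.

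It remains to handle the values of $x$ at which $c_N(x)=0$. Because the equation is of degree exactly $N$, the coefficient $c_N$ is not identically zero; in the present setting the coefficients are (products of) analytic functions of $x$, so the zero set of $c_N$ is nowhere dense. The previous paragraph therefore yields $f_y\equiv 0$ on an open dense set of $x$-values, and since $f$ is smooth the derivative $f_y$ is continuous, so $f_y\equiv 0$ extends to the whole domain. Hence $f$ does not depend on $y$, i.e.\ $f=f(x)$.

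The argument is elementary, and the only point requiring a little care is to apply the finite-root reasoning solely where the polynomial genuinely has degree $N$, and then to remove the nowhere-dense degenerate locus $\{c_N=0\}$ by continuity. The substance of the proof is the topological fact that a continuous map from a connected set into a finite set is constant; the hypothesis $N>0$ is precisely what guarantees that $f$ actually appears in the equation, without which the conclusion would plainly fail.
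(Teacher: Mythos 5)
Your proof is correct, and it takes a genuinely different route from the paper's. The paper argues by differential descent: differentiating $\sum_{n=0}^N a_n(x)f^n=0$ with respect to $y$ gives $\bigl(\sum_{n\geq 1} n\,a_n f^{n-1}\bigr)f_y=0$; either $f_y=0$ or one divides by $f_y$ and obtains a polynomial relation of degree $N-1$ in $f$ with $x$-dependent coefficients, and iterating this, after $N$ steps one would reach $N!\,a_N=0$, contradicting that the equation has degree $N$. Your argument is instead pointwise and topological: for fixed $x_0$ with $c_N(x_0)\neq 0$, the value $f(x_0,y)$ lies in the finite root set of a fixed degree-$N$ polynomial, and a continuous map from a connected set into a finite set is constant; the degenerate locus $\{c_N=0\}$ is then removed by density and continuity of $f_y$. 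Your route buys two things: it uses only continuity of $f$ (the paper needs to differentiate the relation, hence differentiability of $f$), and it makes explicit a genericity point the paper glosses over --- the paper's dichotomy ``either $f_y=0$ or divide by $f_y$'' is pointwise, and its terminal contradiction $a_N=0$ is in fact only obtained on the $x$-projection of the region where $f_y\neq 0$, so it tacitly requires exactly the nowhere-density of $\{a_N=0\}$ that you invoke explicitly. The corresponding price is that you import hypotheses not literally in the lemma statement (analyticity, or at least density of $\{c_N\neq 0\}$, plus connectedness of the $y$-slices and continuity of $f$). These do hold in the paper's application, where the coefficients are polynomial in $x$, $U_x$, $U_{xx}$ for a real-analytic vacuum potential $U$; and some such assumption is genuinely unavoidable, since for arbitrary smooth coefficients whose leading coefficient vanishes on a set with nonempty interior the statement is false (take $a_N$ supported off an interval and $f$ a $y$-dependent bump supported where all coefficients vanish). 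So your proof is not weaker than the paper's; it isolates the implicit regularity both arguments need.
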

\begin{proof}
Denote the equation by $\sum_{n=0}^N a_n(x)\,f^n(x,y)=0$.
If we can factor out $f(x,y)$, then $f=0$ and is independent of both $x$ and $y$.
Otherwise, we take one derivative w.r.t.\ $y$ and obtain
 $\sum_{n=1}^N a_n(x)\,n\,f^{n-1}\,f_y=0$.
Then either $f_y=0$ or we divide by $f_y$ and proceed similarly.
At some point, we either get $f_y=0$ or we end up with $a_N=0$, which contradicts the hypothesis that the polynomial equation be of degree~$N$.
Thus we have $f_y=0$ and $f$ is a function of $x$ only.
\end{proof}

\begin{lemma}\label{la:Ux(x)-means-Uy=0}
Let $U_x=U_x(x)$ be a function of $x$ only. Let the StAV space-time have a non-reducible third degree integral. Then $U_y=0$.
\end{lemma}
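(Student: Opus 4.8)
The plan is to combine the structural consequence of the hypothesis $U_x=U_x(x)$ with the integrability condition forced by non-reducibility, and then to invoke the polynomial principle of Lemma~\ref{la:x-coeffs_polynomial}. First I would unpack the hypothesis: since $U_x$ depends on $x$ only, $U_{xy}=0$, so $U(x,y)=f(x)+h(y)$ and in particular $U_y=h'(y)$ is a function of $y$ alone. Substituting into the \name{Ernst} relation $U_{yy}=-U_{xx}-\frac1x U_x$ from \eqref{eqn:ernst}, the left-hand side depends only on $y$ while the right-hand side depends only on $x$; hence both sides are constant. Thus $U_{yy}$ is constant and $U_y$ is affine in $y$.

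Next I would bring in non-reducibility. A non-reducible degree-3 integral has non-vanishing odd-parity part, which in the rank-2 case means $\alpha\neq0$; Lemma~\ref{la:nec-crit} then forces $A_y-B_x=0$. Using the explicit potentials $V^{\phi\phi}=x^{-2}e^{-2U}$ and $V^{tt}=-e^{2U}$ together with $U=f(x)+h(y)$, I would compute the algebraic expressions $A$ and $B$ (rational in the first and second derivatives of $U$) and expand $A_y-B_x$. After clearing the non-vanishing factor $\nu=\langle\nabla V^{tt},\nabla^\perp V^{\phi\phi}\rangle$, the identity $A_y-B_x=0$ becomes a polynomial relation whose coefficients, by the split form of $U$ and the \name{Ernst} constraint on $f$, depend on $x$ only, with $U_y$ as the sole $y$-dependent unknown.

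Finally, Lemma~\ref{la:x-coeffs_polynomial} applies to this polynomial relation and yields that $U_y$ is independent of $y$; combined with the first step this forces $U_y$ to be constant. The remaining task is to eliminate a non-zero constant value. Here I would feed $U_y=\text{const}$ back into the compatibility $\gamma_{xy}=\gamma_{yx}$ of the secondary \name{Ernst} equations $\gamma_x=-xU_x^2+xU_y^2$, $\gamma_y=-2xU_xU_y$, and cross-check against the leading block \eqref{eqn:key-block}; this should pin the constant to zero and give $U_y=0$.

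The hard part will be the explicit computation and organization of $A_y-B_x$: one must verify that it is genuinely a polynomial in $U_y$ of positive degree, so that Lemma~\ref{la:x-coeffs_polynomial} applies, and that no spurious $y$-dependence survives through the factors $e^{\pm2h(y)}$ once the \name{Ernst} constraints on $f$ are imposed. Ruling out the residual constant value of $U_y$ is the second delicate point, since a naive count leaves open the branch $U_x\propto 1/x$ (for which $U_{yy}=0$ is consistent with a constant $U_y$), and this must be excluded using the full existence equations rather than the integrability condition alone.
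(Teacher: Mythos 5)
Your overall template is the same as the paper's --- produce a relation that is polynomial in $U_y$ with $x$-only coefficients, invoke Lemma~\ref{la:x-coeffs_polynomial} to conclude $U_y=\const$, then kill the constant --- and your opening observation is correct and even slightly sharper than what the paper states: $U_{xy}=0$ gives $U=f(x)+h(y)$, and the \name{Ernst} equation then separates variables, so $U_{yy}$ is constant. However, the two steps that carry the proof are not secured. For the first step you propose to extract the polynomial relation from the integrability condition $A_y-B_x=0$ of Lemma~\ref{la:nec-crit}, whereas the paper extracts it from the $p_1^3p_2$-component of \eqref{eqn:key-block} (after the coefficients of the cubic part have been expressed through $\alpha$ and the relations $\alpha_x=A\alpha$, $\alpha_y=B\alpha$), where the explicit equation is computed and displayed: it is visibly of degree~$4$ in $U_y$ with coefficients built from $x$, $U_x$, $U_{xx}$ only. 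For your route you would have to verify that, after substituting $U=f+h$ and $U_{yy}=\const$ and after the $e^{\pm 2h(y)}$ factors cancel, $A_y-B_x$ is a polynomial in $U_y$ of \emph{positive degree that is not identically zero}; if it degenerates on split-form solutions of the \name{Ernst} equation, Lemma~\ref{la:x-coeffs_polynomial} yields nothing. You flag this risk yourself, but flagging it does not discharge it, and nothing in the proposal rules the degeneration out.

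The second gap is decisive: the mechanism you name for excluding $U_y=c\neq0$ is vacuous. With $\gamma_x=-xU_x^2+xU_y^2$ and $\gamma_y=-2xU_xU_y$, and using $U_{xy}=0$, one computes
\begin{equation*}
\partial_y\gamma_x-\partial_x\gamma_y \;=\; 2xU_yU_{yy}+2U_y\left(U_x+xU_{xx}\right)\;=\;2xU_y\left(U_{yy}+U_{xx}+\tfrac{1}{x}U_x\right)\;=\;0
\end{equation*}
identically, by the primary \name{Ernst} equation --- indeed the secondary \name{Ernst} equations are integrable precisely \emph{because} of the primary one, which is why $\gamma$ exists at all. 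So $\gamma_{xy}=\gamma_{yx}$ imposes no condition on $c$, and the entire burden falls on your unspecified ``cross-check against \eqref{eqn:key-block}.'' That cross-check is exactly where the paper's content lies: it evaluates the $p_1^4$-component of \eqref{eqn:key-block} (with $U_{xx}$ eliminated via \name{Ernst}) to get $6\,U_x\,(1+xU_x)\,(1+2xU_x)=0$, and then in each of the three branches $U_x=0$, $U_x=-\nicefrac{1}{x}$, $U_x=-\nicefrac{1}{2x}$ --- including the $U_x\propto 1/x$ branch you correctly identify as the dangerous one --- returns to the $p_1^3p_2$-component, which forces $c=0$ (e.g.\ $10x^3c^6=0$ in the first two branches, and a relation forcing $xc=\const$, hence $c=0$, in the third). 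Without this explicit case analysis, or an equivalent substitute, the lemma is not proved.
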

\begin{proof}
The proof has two parts:
(1) Show that $U_y$ has to be constant,
(2) Show that the constant is zero.\\
For the first part, consider the $p_1^3p_2$-component of \eqref{eqn:key-block}. Use the \name{Ernst} equation to substitute derivatives $U_{yy}$. In this way, obtain the equation:
\begin{multline}
  10x^3\,U_y^4 +156x^2\,_x\,(1+x\,U_x)\,U_y^2 +36x^2\,U_x\,U_{xx} \\
  -126x^3\,U_x^4 -126x\,U_x^2+18x\,U_{xx}-18\,U_x -252x^2\,U_x^3 =0
\end{multline}
This is a polynomial equation of degree~4 for $U_y$, and all coefficients are functions of $x$ only. 
By lemma~\ref{la:x-coeffs_polynomial}, this means $U_{yy}=0$, so $U_y=\const=:c$.

For the second part of the proof, we insert this result into the $p_1^4$-component of \eqref{eqn:key-block}. If we substitute $U_{xx}$ with the help of the \name{Ernst} equation, we find:
\[
  6\,U_x\,(1+x\,U_x)\,(1+2x\,U_x) =0
\]
Hence, there are 3~cases: $U_x=0$, $U_x=-\frac{1}{x}$ and $U_x=-\frac{1}{2x}$. We treat them separately:
\begin{itemize}
 \item If $U_x=0$, use again the $p_1^3p_2$-component which reads
    \[
      10x^3\,c^6 =0,
    \]
    so $c=0$.
 \item For $U_x=-\frac{1}{x}$ we have the same equation, so again $c=0$.
 \item The case $U_x=-\frac{1}{2x}$ is slightly more involved. The $p_1^3p_2$-component reads
    \[
      \frac{c^2\,(9-312x^2\,c^2+80x^4\,c^4)}{8x} =0.
    \]
    Now, either $c=0$ directly, or $9-312x^2\,c^2+80x^4\,c^4$. In the latter case, $xc=\const$ and hence $c=0$.
\end{itemize}
\end{proof}

\begin{lemma}\label{la:flat_metric_parametrisation}
Using \name{Lewis-Papapetrou} coordinates $(x,y)$, assume the potential function $U$ to be
\[
 e^{2U}=k_U\,\frac{2y+c+\sqrt{4x^2+4y^2+4cy+c^2}}{x^2}, \text{ with } k_U,c\in\mathds{R}.
\]
This provides a parametrization of flat space.
\end{lemma}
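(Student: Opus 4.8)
The plan is to check directly that the stated $U$, together with the $\gamma$ that the secondary \name{Ernst} equations force upon it, yields via~\eqref{eqn:sav-metric} (with $\omega=0$, $R=x$) a metric of vanishing curvature. The first step is to recognize the structure of $U$. Writing $z=y+c/2$ and $\tilde{R}=\sqrt{x^2+z^2}$, one has $4x^2+4y^2+4cy+c^2=4\tilde{R}^2$ and $2y+c=2z$, so that
\[
 e^{2U}=\frac{2k_U\,(z+\tilde{R})}{x^2}, \qquad U=\tfrac12\ln(2k_U)+\tfrac12\ln(z+\tilde{R})-\ln x .
\]
The piece $\tfrac12\ln(z+\tilde{R})$ is the \name{Weyl}--Newtonian potential of a semi-infinite rod and $-\ln x$ that of an infinite line; both are annihilated by the axially symmetric Laplacian $\partial_{xx}+\tfrac1x\partial_x+\partial_{yy}$, so $U$ is harmonic and the primary equation of~\eqref{eqn:ernst} holds. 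This is exactly the integrability condition $\gamma_{xy}=\gamma_{yx}$ for the secondary equations $\gamma_x=-xU_x^2+xU_y^2$, $\gamma_y=-2xU_xU_y$, which can therefore be integrated.

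Second, I would carry out this integration. From $U_x=-\tfrac{z+\tilde{R}}{2x\tilde{R}}$ and $U_y=\tfrac1{2\tilde{R}}$ one finds, up to an additive constant (an overall scaling of the metric),
\[
 \gamma=\tfrac12\ln\!\big(\tilde{R}\,(z+\tilde{R})\big)-\ln x .
\]
Then I would pass to the coordinates $\mu=\tilde{R}+z$, $\nu=\tilde{R}-z$, in which $x^2=\mu\nu$, $\tilde{R}=(\mu+\nu)/2$, and a short computation gives $dx^2+dy^2=\tfrac{\mu+\nu}{4}\big(\tfrac{d\mu^2}{\mu}+\tfrac{d\nu^2}{\nu}\big)$ while $e^{2U}=2k_U/\nu$ and $e^{2\gamma}=(\mu+\nu)/(2\nu)$. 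The conformal factor of the two-dimensional part then collapses, $e^{2U-2\gamma}=4k_U/(\mu+\nu)$, and the full four-dimensional metric becomes block diagonal,
\[
 g=k_U\,\frac{d\mu^2}{\mu}+2k_U\,\mu\,d\phi^2
   +k_U\,\frac{d\nu^2}{\nu}-\frac{\nu}{2k_U}\,dt^2 .
\]

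Third, I would read off flatness from this form. The two blocks are decoupled and depend on disjoint coordinate pairs, so $g$ is a metric product; it therefore suffices to show each $2\times2$ block is flat. For the $(\mu,\phi)$ block one has $E=k_U/\mu$, $G=2k_U\mu$ with $EG$ constant, whence the \name{Gauss} curvature $-\tfrac1{2\sqrt{EG}}\partial_\mu\!\big(G_\mu/\sqrt{EG}\big)$ vanishes; equivalently, the substitution $\mu=r^2/(4k_U)$ brings it to the flat conical form $dr^2+\tfrac12 r^2\,d\phi^2$. The same argument applies verbatim to the $(\nu,t)$ block, which becomes a flat \name{Rindler} wedge. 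A metric product of two flat surfaces is flat, so $\Rmn\equiv0$. As a cross-check, or as an alternative to the coordinate argument, one may instead feed the explicit $U$ and $\gamma$ into a curvature computation (by hand or, as done elsewhere in this paper, with computer algebra) and verify that the \name{Riemann} tensor vanishes identically.

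The only real difficulty is the bookkeeping of the middle step: correctly integrating the secondary \name{Ernst} equations for $\gamma$ and performing the change of variables so that the conformal factor cancels and the metric separates. The conceptual key that removes this difficulty is the rod-plus-line decomposition of $U$, which makes harmonicity transparent and at the same time singles out the coordinates $(\mu,\nu)$ in which the metric block diagonalizes; once these are in place, the verification of flatness is immediate.
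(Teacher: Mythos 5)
Your proof is correct, and after the shared first step it takes a genuinely different route from the paper's. Both you and the paper begin by integrating the secondary \name{Ernst} equations: your $\gamma=\tfrac12\ln\bigl(\tilde R\,(z+\tilde R)\bigr)-\ln x$ agrees, up to the free additive constant, with the paper's $e^{2\gamma}=2k_\gamma\,\sqrt{4x^2+4y^2+4cy+c^2}\;e^{2U}$. But from there the paper verifies flatness by brute force: it plugs $U$ and $\gamma$ into the metric and computes the \name{Riemann} tensor (in practice by computer algebra), observing that it vanishes identically --- exactly the ``cross-check'' you mention at the end. You instead make flatness structural: the rod-plus-line decomposition of $U$ makes harmonicity (hence integrability of the $\gamma$-equations) transparent, and the coordinates $\mu=\tilde R+z$, $\nu=\tilde R-z$ split the four-dimensional metric into a product of a $(\mu,\phi)$ block and a $(\nu,t)$ block, each flat because $EG$ is constant (a cone and a \name{Rindler} wedge), and a product of flat surfaces is flat. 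Your route buys a hand-checkable, machine-free argument that also identifies the flat charts explicitly; the paper's route buys brevity and robustness (it needs no fortunate separation of variables). I verified your intermediate formulas ($U_x=-\tfrac{z+\tilde R}{2x\tilde R}$, $U_y=\tfrac1{2\tilde R}$, the form of $dx^2+dy^2$ in $(\mu,\nu)$, and the block-diagonal metric) against the paper's conventions, and they are all correct.

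Two small wording points. First, the additive constant in $\gamma$ is not ``an overall scaling of the metric'': it rescales only the $e^{2U-2\gamma}\,(dx^2+dy^2)$ block. This is harmless, since the product structure and the constancy of $EG$ in each block persist for every value of that constant (consistent with the paper's free $k_\gamma$), but the parenthetical should be corrected. Second, the Riemannian \name{Gauss}-curvature formula does not apply ``verbatim'' to the Lorentzian $(\nu,t)$ block; the clean justification there is precisely the \name{Rindler} substitution $\nu=\rho^2/(4k_U)$ that you already give.
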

\begin{proof}
Determine the function $\gamma$ from the secondary \name{Ernst} equations and find
\[
  e^{2\gamma}=2\,k_\gamma\ \sqrt{4x^2+4y^2+4cy+c^2}\ e^{2U}, \qquad k_\gamma\in\mathds{R}.
\]
Then compute the Riemann curvature tensor for the metric
\[
  g= e^{2U}\,\left(e^{-2\gamma}\,\left( dx^2+dy^2\right)+x^2\,d\phi^2\right) -e^{-2U}\,dt^2
\]
and find that it vanishes identically.
Thus, the potential function $U$ defines a flat metric, which of course is StAV.
\end{proof}

\begin{lemma}\label{la:rk2}
Let $\rg\,\mathcal{M}=2$ and $\omega=0$, but $\nabla\,V^{\phi\phi},\nabla\,V^{tt}\not=0$. Assume $\alpha\not=0$.
Then there is no non-trivial Killing tensor of valence~3.
\end{lemma}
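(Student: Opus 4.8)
The plan is to convert the standing hypotheses into an overdetermined PDE system for the single metric function $U$ and to show that this system has no solution compatible with $\rg\,\mathcal{M}=2$. Since $\alpha\neq0$, Lemma~\ref{la:nec-crit} forces the integrability condition $A_y-B_x=0$. First I would make this condition explicit: with $\omega=0$ and $R=x$ the reduced potentials are $V^{\phi\phi}=x^{-2}e^{-2U}$ and $V^{tt}=-e^{2U}$, so $A$ and $B$ become rational expressions in $x$, $U_x$, $U_y$ and the second derivatives of $U$. Differentiating once and eliminating $y$-derivatives of $U_{yy}$ through the \name{Ernst} relation \eqref{eqn:ernst} ($U_{yy}=-U_{xx}-\tfrac1x U_x$) turns $A_y-B_x=0$ into a single polynomial PDE in $x$ and the derivatives of $U$.

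This alone does not pin down $U$, so I would adjoin the defining equation \eqref{eqn:key-block}, $\{T,I^\circled{3}\}=0$. Expanding its coefficients with respect to the monomials $p_x^j p_y^{4-j}$ gives five scalar equations; the $p_x^4$ and $p_x^3p_y$ components are exactly the ones exploited in Lemma~\ref{la:Ux(x)-means-Uy=0}, and after substituting $U_{xx}$ and $U_{yy}$ from the \name{Ernst} equations they become polynomial relations among $x$, $U_x$, $U_y$ and the lower derivatives. Treating the partial derivatives of $U$ as independent unknowns, I then have an overdetermined system made of the integrability PDE together with these key-block components.

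The core of the argument is a prolongation–projection reduction of this system (step (v)). I would prolong by differentiating the available equations in $x$ and $y$, then project by eliminating the highest occurring derivatives of $U$, repeatedly using the \name{Ernst} substitution to keep the differential order bounded. The aim is to collapse the system to a relation whose coefficients depend on $x$ alone; by Lemma~\ref{la:x-coeffs_polynomial} such a relation strips off the $y$-dependence and forces $U_x=U_x(x)$. At that point Lemma~\ref{la:Ux(x)-means-Uy=0} applies and yields $U_y=0$, so $\nabla V^{\phi\phi}$ and $\nabla V^{tt}$ both point in the $x$-direction and are therefore parallel, contradicting $\rg\,\mathcal{M}=2$. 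Any residual branch of the reduction in which $U$ genuinely depends on $y$ should be matched against the parametrization of Lemma~\ref{la:flat_metric_parametrisation}, identifying it as flat space, which again contradicts $\rg\,\mathcal{M}=2$ since flatness is the rank-$0$ case. In every branch the hypothesis $\alpha\neq0$ becomes untenable, so no non-trivial valence-$3$ Killing tensor can exist.

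I expect the prolongation–projection step to be the genuine obstacle. The explicit $A_y-B_x$ and the key-block components are large rational expressions, and the difficulty is to choose an elimination order that actually terminates in an ODE (or an $x$-only polynomial relation) rather than generating an unbounded tower of derivatives. Care is also needed at each division to record the case where a leading coefficient vanishes; these degenerate branches must be shown either to reproduce $U_x=U_x(x)$ or to fall under Lemma~\ref{la:flat_metric_parametrisation}. Carrying this out in practice will almost certainly require computer algebra, as already signalled by the \name{Zipoy-Voorhees} example.
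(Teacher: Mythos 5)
Your overall strategy---adjoining the integrability condition $A_y-B_x=0$ from Lemma~\ref{la:nec-crit} to the components of \eqref{eqn:key-block}, bounding the differential order via the \name{Ernst} relations, and running a prolongation--projection elimination whose branches are settled by Lemmas~\ref{la:Ux(x)-means-Uy=0} and~\ref{la:flat_metric_parametrisation}---is essentially the paper's proof. But your endgame contains a genuine logical error, and it sits precisely on the branch that carries all the content. You dispose of the residual branch in which $U$ depends on $y$ by identifying it as flat space and declaring this to ``contradict $\rg\,\mathcal{M}=2$ since flatness is the rank-$0$ case.'' Flatness is \emph{not} the rank-$0$ case: Lemma~\ref{la:add-lin} gives only the implication $\rg\,\mathcal{M}=0\Rightarrow$ flat, not its converse. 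Indeed, with $\omega=0$ and $R=x$ one has $V^{\phi\phi}=x^{-2}e^{-2U}$ and $V^{tt}=-e^{2U}$, and a direct computation gives
\[
 \det\mathcal{M} \;=\; V^{\phi\phi}_x V^{tt}_y - V^{\phi\phi}_y V^{tt}_x \;=\; \frac{4\,U_y}{x^3},
\]
so $\rg\,\mathcal{M}=2$ is equivalent to $U_y\not=0$. The flat metric parametrized in Lemma~\ref{la:flat_metric_parametrisation} has $U_y\not=0$ and is therefore itself a rank-$2$ configuration satisfying every hypothesis of the lemma. Consequently the overdetermined system you set up \emph{does} admit solutions compatible with $\rg\,\mathcal{M}=2$, and a proof whose stated aim is ``to show that this system has no solution compatible with $\rg\,\mathcal{M}=2$'' cannot succeed.

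What actually happens is the following: after the projection, the $y$-derivative of the $p_x^4$-component factors as $x\,U_x^2\,(1+2x\,U_x)\,(1+x\,U_x)^2\,(x\,U_x^2+U_x+x\,U_y^2)^3=0$. The first three factors give $U_x\in\{0,-\tfrac{1}{x},-\tfrac{1}{2x}\}$ and are eliminated by Lemma~\ref{la:Ux(x)-means-Uy=0}, contradicting $U_y\not=0$, exactly as you anticipate. The fourth branch, $U_y^2=-\tfrac{1}{x}U_x(1+x\,U_x)$, however, is \emph{consistent}: it satisfies all remaining components of \eqref{eqn:key-block} and integrates (via the ODE $\tfrac{\mathrm{d}}{\mathrm{d}x}U_y=-4x\,U_y^3$) precisely to the potential of Lemma~\ref{la:flat_metric_parametrisation}, i.e.\ to a flat, rank-$2$ metric. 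On this branch the conclusion of the lemma is not obtained by contradiction but by invoking that flat space admits only reducible Killing tensors of valence~3 (the result cited as \cite{thompson_killing_1986} in the proof of Theorem~\ref{thm:main-result-1}), so no non-trivial one exists. Your proposal is missing this reducibility-in-flat-space argument, and without it the proof fails on the only branch where the system is actually solvable. A smaller mismatch: Lemma~\ref{la:x-coeffs_polynomial} is not what collapses the projected system---the factored relation above has coefficients involving $U_x$ and $U_y$ jointly; that lemma is used only inside Lemma~\ref{la:Ux(x)-means-Uy=0}, once one is already in a branch where $U_x$ is an explicit function of $x$.
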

\begin{proof}
We assume there was such a Killing tensor. Then, by the necessary criterion (lemma~\ref{la:nec-crit}), $A_y-B_x=0$. In addition, consider \eqref{eqn:key-block}, and the \name{Ernst} equations. Since we chose \name{Lewis-Papapetrou} coordinates with $R=x$, we have $U_y\not=0$.

Consider \eqref{eqn:key-block} in combination with the necessary criterion from lemma~\ref{la:nec-crit}, plus the \name{Ernst} equations. The \name{Ernst} equations are to be invoked mainly in order to substitute $\frac{\mathrm{d}^2U}{\mathrm{d}y^2}$.
We take derivatives w.r.t.\ $x$ and $y$ of \eqref{eqn:key-block}. Then, we have 18~equations (\eqref{eqn:key-block} plus the necessary criterion $A_y-B_x=0$ from lemma~\ref{la:nec-crit}). Using the Ernst equations, we have only the following unknown functions:
\[
 U_{xxxx},\ U_{xxxy},\ U_{xxx},\ U_{xxy},\ 
 U_{xx},\ U_{xy},\ U_{x},\ U_{y},\ 
 U, \gamma.
\]
Use the $x$-derivative of the $p_1^3p_2$-component to substitute $U_{xxxy}$, and the $x$-derivative of the $p_1^2p_2^2$-component to substitute $U_{xxxx}$ in terms of lower order derivatives. The quantity $U_{xxy}$ can be substituted for via the $x$-derivative of the integrability criterion, but only if
\begin{equation}\label{eqn:seitenzweig}
 (1+2x\,U_x)\,(x\,U_x^2-3x\,U_y^2+U_x) \not=0.
\end{equation}
In this case, we can proceed as follows: Substitute $U_{xxx}$ by the $x$-derivative of the $p_1^4$-component, and use this component to substitute $U_{xx}$. Finally, substitute $U_{xy}$ using the integrability condition.

With all these substitutions at hand, we now have only equations in the unknowns $U_x$, and $U_y$ left.
For instance, the derivative w.r.t.\ $y$ of the $p_1^4$-component of \eqref{eqn:key-block} reads
\[
 x\,U_x^2\,(1+2x\,U_x)\,(1+x\,U_x)^2\,(x\,U_x^2+U_x+x\,U_y^2)^3 = 0.
\]
Therefore, either $U_x=0$ or $U_x=-\frac{1}{x}$ or $U_x=-\frac{1}{2x}$, or $x\,U_x^2+U_x+x\,U_y^2=0$. The three cases mentioned first are covered by lemma~\ref{la:Ux(x)-means-Uy=0}, and obviously in contradiction to the hypothesis $U_y\not=0$.

We are left with the forth case.
Solve the equation and obtain
\begin{equation}\label{eqn:Uy2-fourth-case}
  U_y^2=-\frac{1}{x}\,U_x\,(1+x\,U_x).
\end{equation}
Then substitute this into the $p_1^4$-component of \eqref{eqn:key-block} and obtain an expression for $U_{xx}$, and from the integrability condition we obtain an expression for $U_{xy}$.\footnote{One might want to check that both expressions are compatible, which is true.}
The other components of \eqref{eqn:key-block} are then satisfied trivially.
At this point, it is a good idea to go back to the expressions for $U_y^2$ and $U_{xy}$.
Combining both, we find the equation
\[
  \frac{\mathrm{d}}{\mathrm{d}x} U_y = -4x\,U_y^3
\]
which is an ODE for $U_y$ and can be solved in a straightforward way. 
The solution is
\[
  U_y = \frac{1}{\sqrt{ (4x^2-f_1(y)) }}
\]
Use this to replace $U_y^2$ in \eqref{eqn:Uy2-fourth-case}:
\[
  f_1(y) = -\frac{ x(1+4x\,U_x+4x^2\,U_x^2) }{ U_x\,(1+x\,U_x) }
\]
Solve this for $U_x$. There are two branches of possible solutions:
\[
  U_x= \frac{1}{2}\,\frac{-4x^2-f_1 \pm\sqrt{4x^2\,f_1+f_1^2}}{x(4x^2+\,f_1)}.
\]
We can use the integrability criterion to find an explicit form for $f_1$.
First, obtain two differential equations:
\[
  (f_1)_y \pm 4\sqrt{f_1} =0
\]
Up to the sign of the integration constant, both have the same solution
\[
   f_1=(2y+c)^2 =4y^2+4cy+c^2.
\]
Using again the equation for $U_y^2$ in terms of $U_x$ and integrating, one finds
\[
   U= \frac{1}{2}\,\ln(2y+c+\sqrt{4x^2+4y^2+4yc+c^2}) +f_2(x)
\]
with $f_2$ first being an unspecified integration `constant'.
Checking if this solution is compatible with the expression for $U_x$ found above, $(f_2)_x$ can take two possible values:
Either $(f_2)_x=-\frac{1}{x}$, or
\[
   \frac{df_2}{dx}=(f_2)_x(x)=\frac{-4x}{(2y+c+\sqrt{4x^2+(2y+c)^2})\,\sqrt{4x^2+(2y+c)^2}}.
\]
Now, consider the \name{Ernst} equation $U_x+x\,U_{yy}+x\,U_{xx}=0$. For the first solution for $(f_2)_x$, this implies $x=0$, so this is no valid solution.
Therefore, conclude
\[
  U= \frac{1}{2}\,\ln(2y+c+\sqrt{4x^2+4y^2+4yc+c^2})-\ln(x)+c_2
\]
with an additional integration constant $c_2\in\mathds{R}$.
By lemma~\ref{la:flat_metric_parametrisation}, the metric is flat and therefore all Killing tensor fields are reducible. This concludes step (vi) in the list of section~\ref{sec:method}.

To complete the proof, we still have to account for the case when \eqref{eqn:seitenzweig} is not satisfied. In this case, either $U_x=-\frac{1}{x}$ (this is covered by lemma \ref{la:Ux(x)-means-Uy=0}) or
\[
  U_x\,(1+x\,U_x)-3x\,U_y^2 =0
\]
We solve for $U_y^2$:
\[
  U_y^2 = \frac{U_x\,(1+x\,U_x)}{3x}
\]
From the $p_1^4$-component and the integrability criterion, we can also get another expression for $U_y^2$:
\[
  U_y^2 = \frac{3\,U_x\,(1+x\,U_x)}{x}
\]
The only way to allow both solutions to be true is if $U_x=0$ or $U_x=-\frac{1}{x}$. Both cases are covered by lemma~\ref{la:Ux(x)-means-Uy=0}.
\end{proof}

We have considered odd-parity third-degree integrals in a StAV space-time. Let us now summarize the results and merge them into one theorem:
\begin{proof}[Proof of theorem 1]
If $M$ is flat on a neighborhood, then it is totally reducible there \cite{thompson_killing_1986}. Thus assume that $M$ is non-flat.\\
First, consider only odd-parity integrals:\\
\textbf{Claim:}
Let $M$ be non-flat with $\omega=0$, $\nabla\,V^{\phi\phi}\not=0$, and $\nabla\,V^{tt}\not=0$.
Let $F$ be a third-degree integral of odd-parity in $M$.
Then $F$ is reducible by at least one degree.\\
\textit{Proof of the claim.}
First, let us consider the case when there is an additional Killing vector field. As we have seen in lemma \ref{la:rk1}, this means that the odd-parity third-degree integral is reducible by the (linear) integral $p_y$. So, the assertion is proven in this case.

Second, if there is no additional Killing vector field, proposition \ref{la:rk2} tells us (provided $\alpha\not=0$) that there is no odd-parity third-degree integral. In the case $\alpha=0$, we have $F=0$. Thus, the assertion is proven.\qed
\vskip 0.3cm
Now, consider the even-parity contributions.
The quadratic contributions $F^\circled{2}_{\phi}$ and $F^\circled{2}_{t}$ must obey the equation
\begin{align*}
\{T,F^\circled{2}_k\}	&= 0 \\
\shortintertext{as well as equations of the form}
\{T,F^\circled{0}_{abk}\}+\{V^{ab},F^\circled{2}_k\}	&= 0,
\end{align*}
where $a,b,k\in \{\phi,t\}$.

These, however, are precisely equations \eqref{eqns:quadratic-integrals} for quadratic integrals with leading term $F^\circled{2}_\phi$ or $F^\circled{2}_t$, respectively.
This means that
\[
  p_a\,(F^\circled{2}_a
  			+F^\circled{0}_{a\phi\phi}\,p_\phi^2
				+F^\circled{0}_{a t\phi}\,p_t\,p_\phi
				+F^\circled{0}_{a tt}\,p_t^2),
\]
$a=\phi,t$, are quadratic integrals and therefore, the even-parity contributions to the degree~3 integral $F$ are reducible by $p_\phi$ and $p_t$, respectively.
Hence, also the entire integral $F$ (consisting of the parts with degree from 3 down to 0) is reducible by one degree.
\end{proof}

\subsection{Zipoy-Voorhees}
Consider again the \name{Zipoy-Voorhees} class. We already considered third-degree odd-parity integrals in such space-times. Let us now consider even-parity components and assume w.l.o.g.\ $\delta\not=0$.
We use for the Hamiltonian~$H$ the representation
\begin{equation}
  H=\Omega_1\,p_x^2+\Omega_2\,p_y^2+V_\phi\,p_\phi^2+V_t\,p_t^2
\end{equation}
and denote the integral by
\[
  F=a_0\,p_x^2+a_1\,p_x\,p_y+a_2\,p_y^2 + b_0\,p_\phi^2+b_1\,p_\phi\,p_t+b_2\,p_t^2.
\]

From each polynomial of degree~1 after split w.r.t.\ $p_x,p_y$ (cf.\ remark~\ref{rmk:quadratic-integrals}), we obtain integrability conditions for $b_0$ and $b_2$. Automatically, $b_1=\const$ is no longer of interest.

Combining the \name{Bertrand-Darboux} relations and equations obtained from the degree~3 polynomial after splitting w.r.t.\ $(p_x,p_y)$, we can solve for derivatives of $a_0$, $a_1$ and $a_2$, and derive integrability conditions for them.
From the integrability conditions, we can deduce that $a_1=0$ and that (at least if $\delta\not=1$)
\[
  (y^2-1)a_2+(x^2-1)a_0=0.
\]
From the \name{Bertrand-Darboux} equations for $b_0$, $b_2$, we can now deduce $d(a_0)$ in terms of $a_0$ and solve the system of differential equations, obtaining
\[
  a_0=c_1\,(y^2-x^2)^{1-\delta^2}(x+1)^{\delta^2+\delta-1}(x-1)^{\delta^2-\delta-1}.
\]
Then, we can immediately compute $a_2$:
\[
  a_2=-c_1\,\left(\frac{x^2-1}{x^2-y^2}\right)^{\delta^2}\,\left(\frac{x+1}{x-1}\right)^\delta\,\frac{x^2-y^2}{y^2-1}.
\]
Finally, from the equations obtained from the degree~1 polynomial after split w.r.t.\ $p_x,p_y$, we obtain the derivatives of $b_0$, $b_2$, and by integration
\begin{align*}
  b_0		&= -c_1\,(y^2-1)\,(x^2-1)\,\left(\frac{x+1}{x-1}\right)^\delta+c_2 \\
  b_2		&= -c_1\,\left(\frac{x-1}{x+1}\right)^\delta+c_3
\end{align*}
Comparing this result to the Hamiltonian shows that 
\[
  F=c_1\,H+c_2\,p_\phi^2+c_3\,p_\phi\,p_t+c_4\,p_t^2.
\]
This means that every quadratic integral is reducible, provided $\delta\not=1$ (in case $\delta=1$, we obtain the Schwarzschild metric, which is integrable with the additional integral in involution being given by a quadratic integral. This quadratic integral, however, is reducible by linear integrals that are not in involution).
Together with theorem~\ref{thm:main-result-1}, this proves the assertion.
\qed

\section{Conclusion}
In this paper we gave a proof for the reducibility of valence~3 Killing tensor fields in static and axially symmetric vacuum space-times (Weyl's class).
We saw that using prolongation-projection is an efficient way to decide on the existence of integrals in SAV metrics even if the metric is not given specifically.
We plan to extend the result for degree~3 to the fully stationary SAV case. Though computationally more challenging, we are not aware of major conceptual problems with this.
As for generalizations beyond the SAV context, the line of reasoning made here is in principle not specific to the SAV class of space-times, and an analogous approach might work for other classes of space-times, too.

\section*{Acknowledgments}
The author is financially supported by Deutsche Forschungsgemeinschaft (research training group 1523/2 Quantum and Gravitational Fields).
He wishes to thank Vladimir Matveev for discussions and for remarks on the manuscript.

\printbibliography

\noindent The computer algebra computations for this paper have been performed using Maple 18. Maple is a trademark of Waterloo Maple Inc.

\end{document}